\theoremstyle{plain}
\newtheorem{Theorem}{Theorem}[section]
\newtheorem{Lemma}[Theorem]{Lemma}
\newtheorem{Proposition}[Theorem]{Proposition}
\newtheorem{Corollary}[Theorem]{Corollary}
\theoremstyle{definition}
\newcommand{\R}{\mathbb{R}}
\DeclareMathOperator{\dnu}{\mathrm{d}\nu(x)}
\title{Explicit bounds from the Alon--Boppana theorem}
\author{Joseph Richey\\ \texttt{josephlr@umich.edu}\\ University of Michigan \and Noah Shutty\footnote{Partially supported by the University of Michigan Undergraduate Research Opportunities Program.}\\ \texttt{noajshu@umich.edu}\\ University of Michigan \and Matthew Stover\footnote{This material is based upon work supported by the National Science Foundation under Grant Numbers DMS 1045119 and 1361000. The author acknowledges support from U.S. National Science Foundation grants DMS 1107452, 1107263, 1107367 "RNMS: GEometric structures And Representation varieties" (the GEAR Network).}\\ \texttt{mstover@temple.edu}\\ Temple University}
\date{\today}
\begin{document}

\maketitle

\begin{abstract}
The purpose of this paper is to give explicit methods for bounding the number of vertices of finite $k$-regular graphs with given second eigenvalue. Let $X$ be a finite $k$-regular graph and $\mu_1(X)$ the second largest eigenvalue of its adjacency matrix. It follows from the well-known Alon--Boppana Theorem, that for any $\epsilon > 0$ there are only finitely many such $X$ with $\mu_1(X) < (2 - \epsilon) \sqrt{k - 1}$, and we effectively implement Serre's quantitative version of this result. For any $k$ and $\epsilon$, this gives an explicit upper bound on the number of vertices in a $k$-regular graph with $\mu_1(X) < (2 - \epsilon) \sqrt{k - 1}$.
\end{abstract}

\section{Introduction}\label{sec:intro}

The purpose of this paper is to give explicit methods for bounding asymptotic behavior of the spectrum of finite $k$-regular graphs. We begin with notation that will be used throughout. Fix $k \ge 3$ and let $X$ be a finite connected $k$-regular graph with $n$ vertices. If $A_X$ is the adjacency matrix of $X$, let
\[
\mu_0(X) \ge \mu_1(X) \ge \cdots \ge \mu_{n - 1}(X)
\]
be its eigenvalues, i.e., its \emph{spectrum}. It is well-known that $\mu_0(X) = k$, and $|\mu_j(X)| \le k$ for all $0 \le j \le n - 1$. Throughout this paper we study $\mu_j$ and leave it to the interested reader to convert our results to the \emph{Laplacian spectrum} of $X$
\[
\{ \lambda_j(X) = k - \mu_j(X) \} \subset [0, 2 k].
\]
Our starting point is the following quantitative version, due to Serre, of the famous theorem of Alon and Boppana (see \cite{Alon--Boppana}, \cite{HLW}, or \cite[Theorem 1.4.9]{DSV}).

\begin{Theorem}[Quantitative Alon--Boppana Theorem]\label{thm:QABT}
For any $\epsilon > 0$ and natural number $k$, there exists a positive real constant $C(k, \epsilon)$ such that for any $k$-regular graph $X$ on $n$ vertices
\begin{equation}\label{ABT}
\# \left\{ j\ \mid\ \mu_j(X) \in [(2 - \epsilon) \sqrt{k - 1}, k] \right\} \ge C(k, \epsilon) n.
\end{equation}
\end{Theorem}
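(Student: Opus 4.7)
My plan is to apply the trace method, pairing the spectrum of $X$ against the Kesten--McKay spectral measure $\sigma$ of the universal cover $T_k$, the $k$-regular infinite tree. Recall that $\sigma$ has density $k\sqrt{4(k-1)-x^2}/(2\pi(k^2-x^2))\,dx$ supported on $[-2\sqrt{k-1},\,2\sqrt{k-1}]$, and its $m$-th moment equals the number $N_m(T_k,v_0)$ of closed walks of length $m$ in $T_k$ based at any root $v_0$. The covering map $T_k \to X$ sends each closed walk at $v_0$ injectively onto a closed walk in $X$ at the image vertex, so $N_m(X,v) \ge N_m(T_k,v_0)$ for every $v \in X$; summing over $v$ gives $\mathrm{Tr}(A_X^m) \ge n\,N_m(T_k,v_0)$. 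Consequently, for any polynomial $f(x) = \sum_{m\ge 0} a_m x^m$ with all $a_m \ge 0$, one gets the one-sided trace inequality
\[
\sum_j f(\mu_j(X)) \;=\; \mathrm{Tr}(f(A_X)) \;\ge\; n \int f\,d\sigma.
\]

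I would then test this inequality against $f(x) = (x + 2\sqrt{k-1})^{2N+1}$ for a large integer $N$ to be chosen. This $f$ has non-negative coefficients, is $\le 0$ on $[-k,\,-2\sqrt{k-1}]$, and is monotone increasing on $[-2\sqrt{k-1},\,k]$. Splitting the eigenvalues of $X$ according to whether $\mu_j < -2\sqrt{k-1}$, lies in the middle band $[-2\sqrt{k-1},\,(2-\epsilon)\sqrt{k-1}]$, or lies in the target interval $I = [(2-\epsilon)\sqrt{k-1},\,k]$, I discard the non-positive contribution from the first region and bound $f$ by its right endpoint on each of the remaining two to obtain
\[
n\,f\bigl((2-\epsilon)\sqrt{k-1}\bigr) + \#\{j : \mu_j(X) \in I\}\cdot f(k) \;\ge\; n\int f\,d\sigma.
\]
Rearranging and substituting explicit values yields
\[
\#\{j : \mu_j(X) \in I\} \;\ge\; n\cdot\frac{\int f\,d\sigma - \bigl((4-\epsilon)\sqrt{k-1}\bigr)^{2N+1}}{\bigl(k + 2\sqrt{k-1}\bigr)^{2N+1}}.
\]

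The remaining task is to pick $N$ so that the numerator is positive. Because $\sigma$ vanishes like $\sqrt{2\sqrt{k-1}-x}$ at its upper edge, a standard Laplace-type estimate gives $\int f\,d\sigma \sim c_k\,(4\sqrt{k-1})^{2N+1}/N^{3/2}$ as $N \to \infty$, while $f\bigl((2-\epsilon)\sqrt{k-1}\bigr) = (1-\epsilon/4)^{2N+1}(4\sqrt{k-1})^{2N+1}$ is exponentially smaller. Hence for $N = N(k,\epsilon)$ large enough the numerator is at least $\tfrac{1}{2}\int f\,d\sigma$, and the ratio produces a positive constant $C(k,\epsilon)$ independent of $n$. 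The main obstacle is the one-sidedness of the trace inequality: the test polynomial must have non-negative coefficients (to apply the walk-counting comparison with $T_k$), yet must also concentrate enough weight at the upper edge $2\sqrt{k-1}$ of $\mathrm{supp}(\sigma)$ to dominate its value at the interior point $(2-\epsilon)\sqrt{k-1}$. The shift $x \mapsto x + 2\sqrt{k-1}$ combined with an odd exponent achieves both, and it is the square-root edge behavior of $\sigma$ that makes the required asymptotic separation of scales go through.
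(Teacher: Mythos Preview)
Your argument is correct and gives a genuinely different proof from the paper's. Both proofs rest on a one-sided positivity inequality coming from the covering $T_k \to X$, but they work in different polynomial bases and with different test functions. The paper rescales the spectrum by $\sqrt{k-1}$, passes to the Chebyshev basis $V_m$, and uses the trace formula in the form $\int V_m\,d\nu \ge 0$ for the empirical spectral measure; it then tests against $Y_m(x)=V_m(x)^2/(x-\alpha_m)$, which is a nonnegative $V$-combination that is $\le 0$ on $(-\infty,\alpha_m]$, to force mass into $[2-\epsilon,L]$, and extracts the positive constant $C(L,\epsilon)$ by a weak-$*$ compactness argument on measures. You instead stay in the monomial basis, use the walk-counting lower bound $\mathrm{Tr}(A_X^m)\ge n\int x^m\,d\sigma$ directly against the Kesten--McKay measure, and test with $(x+2\sqrt{k-1})^{2N+1}$; your constant comes from an explicit Laplace-type edge estimate rather than compactness. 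The Chebyshev route is the sharper one---it is exactly what underlies the effective numerical bounds that are the paper's main point---while your route is more self-contained, avoids the compactness step entirely, and produces an explicit (if cruder) $C(k,\epsilon)$ in one shot.
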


See \cite{Cioaba} for another elementary proof, and see \cite{Mohar} for a proof that also includes multipartite graphs and an improvement on Theorem \ref{thm:QABT} for graphs of bounded global girth (the original extension to irregular graphs was due to Y.\ Greenberg's Ph.D.\ thesis; see \cite{Cioaba2}). See \cite{Hoory} for a further generalization.

Though our methods can be used to study the entire spectrum, the focus of this paper will be the behavior of arguably the most important piece of the spectrum, $\mu_1(X)$. In particular, we study the following well-known variant of Theorem \ref{thm:QABT}.

\begin{Theorem}[Finiteness for small $\mu_1$]\label{thm:Finiteness}
For any integer $k \ge 3$ and real number $z < 2 \sqrt{k - 1}$, there are only finitely many $k$-regular graphs $X$ with $\mu_1(X) = z$.
\end{Theorem}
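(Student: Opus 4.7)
The plan is to deduce this directly from Theorem \ref{thm:QABT}. Since $z < 2\sqrt{k-1}$, I first choose $\epsilon > 0$ small enough that $(2-\epsilon)\sqrt{k-1} > z$; any $\epsilon \in (0, 2 - z/\sqrt{k-1})$ works. With this $\epsilon$ fixed, Theorem \ref{thm:QABT} furnishes a constant $C(k,\epsilon) > 0$ such that every $k$-regular graph $X$ on $n$ vertices has at least $C(k,\epsilon)\, n$ eigenvalues in the interval $I = [(2-\epsilon)\sqrt{k-1},\, k]$.

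Now suppose $X$ is a connected $k$-regular graph with $\mu_1(X) = z$. By the choice of $\epsilon$ we have $z < (2-\epsilon)\sqrt{k-1}$, so every eigenvalue of $X$ other than $\mu_0(X) = k$ is at most $\mu_1(X) = z$ and hence lies strictly below $I$. Thus the only eigenvalue of $X$ in $I$ is $\mu_0(X) = k$, which has multiplicity one by connectedness. Combined with Theorem \ref{thm:QABT}, this yields
\[
1 \;=\; \#\{ j \;\mid\; \mu_j(X) \in I \} \;\ge\; C(k,\epsilon)\, n,
\]
so $n \le 1/C(k,\epsilon)$. Since for fixed $k$ there are only finitely many $k$-regular graphs (up to isomorphism) on any bounded number of vertices, the uniform bound on $n$ produces only finitely many candidate $X$ with $\mu_1(X) = z$.

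Because the argument is a one-line reduction, the real content of the theorem is absorbed entirely into Theorem \ref{thm:QABT}: the essential obstacle is Serre's quantitative refinement of Alon--Boppana, which supplies the linear-in-$n$ count of large eigenvalues and therefore the constant $C(k,\epsilon)$. Once that input is available, no further spectral or combinatorial work is needed here, and an \emph{effective} bound on the number of vertices (and hence on the number of graphs) is immediately controlled by the effective constant $C(k,\epsilon)$ produced by a quantitative form of Theorem \ref{thm:QABT}.
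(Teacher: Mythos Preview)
Your argument is correct and matches the paper's own (implicit) proof: the paper treats Theorem~\ref{thm:Finiteness} as an immediate corollary of Theorem~\ref{thm:QABT}, noting just after the statement that one may take $v(k,z) < C(k,\epsilon)^{-1}$ with $z = (2-\epsilon)\sqrt{k-1}$, which is exactly the bound $n \le 1/C(k,\epsilon)$ you derive. Your observation that $\mu_0 = k$ is the sole eigenvalue in $I$ (by connectedness, which is automatic since $\mu_1(X) = z < k$) is the only step the paper leaves unstated.
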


Let $v(k, z) : \mathbb{N} \times \R \to \mathbb{N} \cup \{\infty\}$ be the maximum number of vertices of a $k$-regular graph $X$ with $\mu_1(X) \le z$. Then $v(k, z) < \infty$ for $z < 2 \sqrt{k - 1}$. Notice that the existence of $k$-regular Ramanujan graphs, only very recently proven for all $k$ in \cite{Marcus--Spielman--Srivastava}, implies that there is an infinite sequence $\{X_i\}$ of $k$-regular graphs with $\mu_1(X_i) \leq 2 \sqrt{k - 1}$ for all $i$. Thus $v(k, z) = \infty$ for $z \ge 2 \sqrt{k - 1}$.

In fact, one can calculate $c$ explicitly from Theorem \ref{thm:QABT} by taking any $v(k, z) < C(k, \epsilon)^{-1}$, where $z = (2 - \epsilon) \sqrt{k - 1}$. In \S\ref{sec:ProofOfABT} of this paper, we sketch a proof of Theorem \ref{thm:QABT} and describe how one can extract explicit bounds for $v(k, z)$. Our methods lead to the following theorem.

\begin{Theorem}\label{thm:IntroMachine}
Let
$k \ge 3$ be an integer, $z$ be any real number such that $z < 2 \sqrt{k - 1}$, and $m$ be the smallest integer such that
\[
\frac{z}{\sqrt{k - 1}} < \alpha_m = 2 \cos(\pi / m + 1).
\]
Let $U_j$ be the $j^{th}$ Chebyshev polynomial of second kind, set $V_j(x) = U_j(\frac{x}{2})$, and define functions:
\begin{eqnarray}
F_m(x) &=& \sum_{j = 0}^m V_{2 j}(x) \nonumber \\
\widehat{F}_m(x) &=& \frac{F_m(x)}{x - \alpha_m} \nonumber
\end{eqnarray}
For any $0 < s < \alpha_m - \frac{z}{\sqrt{k - 1}}$, write the function $x \mapsto \widehat{F}_m(x + s)$ in the form $\sum c_j(s) V_j(x)$. Then $c_j(s) \ge 0$ for all $j$, $c_0(s) > 0$, and
\begin{equation}\label{eq:IntroCBound}
v(k, z) \le \frac{\widehat{F}_m(L + s)}{c_0(s)}.
\end{equation}
\end{Theorem}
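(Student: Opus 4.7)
My proposal is to follow the test-polynomial paradigm from Serre's proof of Theorem~\ref{thm:QABT} (sketched in \S\ref{sec:ProofOfABT}). Work with the normalized eigenvalues $\tilde\mu_i := \mu_i(X)/\sqrt{k-1}$, so that $\tilde\mu_0 = L := k/\sqrt{k-1}$ while $\tilde\mu_i \le z/\sqrt{k-1}$ for $i \ge 1$. The strategy is: the polynomial $P(x) := \hat F_m(x + s)$ will be non-positive at $\tilde\mu_i$ for every $i \ge 1$ and will satisfy $\sum_i P(\tilde\mu_i) \ge n\,c_0(s)$ by the closed-walk trace inequality, so the lone non-negative term $P(\tilde\mu_0) = \hat F_m(L + s)$ must dominate, yielding $n \le \hat F_m(L+s)/c_0(s)$.

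The first analytic step is to establish the sign of $\hat F_m$. Using $V_{2j}(2\cos\theta) = \sin((2j+1)\theta)/\sin\theta$ and the classical telescoping identity $\sum_{j=0}^m \sin((2j+1)\theta) = \sin^2((m+1)\theta)/\sin\theta$, I obtain the closed form
\begin{equation*}
F_m(2\cos\theta) = \frac{\sin^2((m+1)\theta)}{\sin^2\theta} \ge 0,
\end{equation*}
which shows that $F_m$ has double zeros exactly at $2\cos(j\pi/(m+1))$ for $j = 1, \ldots, m$; since these account for all $2m$ roots by degree and $F_m > 0$ outside $[-2,2]$, we conclude $F_m \ge 0$ on $\R$. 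Hence $\alpha_m$ is a double root, $\hat F_m = F_m/(x-\alpha_m)$ is a polynomial of degree $2m-1$, $\hat F_m \le 0$ on $(-\infty, \alpha_m]$, and $\hat F_m > 0$ on $(\alpha_m, \infty)$. For any admissible $s$, $\tilde\mu_i + s \le z/\sqrt{k-1} + s < \alpha_m$ for every $i \ge 1$, so $\hat F_m(\tilde\mu_i + s) \le 0$, while $\hat F_m(L + s) > 0$. Combining this with the Serre trace inequality $\sum_i V_j(\tilde\mu_i) \ge 0$ for $j \ge 1$ (with $\sum_i V_0(\tilde\mu_i) = n$), applied term-by-term to the expansion $\hat F_m(x+s) = \sum_j c_j(s) V_j(x)$, gives
\begin{equation*}
n\,c_0(s) \;\le\; \sum_i \hat F_m(\tilde\mu_i + s) \;\le\; \hat F_m(L + s),
\end{equation*}
which rearranges to \eqref{eq:IntroCBound} once positivity of $c_0(s)$ is known.

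The main obstacle is the positivity claim $c_j(s) \ge 0$ and $c_0(s) > 0$; everything else is bookkeeping. My planned approach is to exploit the factorization
\begin{equation*}
\hat F_m(x) = (x - \alpha_m)\prod_{j=2}^m \bigl(x - 2\cos(j\pi/(m+1))\bigr)^2,
\end{equation*}
and to expand $\hat F_m(x+s)$ in the $V_j$ basis inductively via the recurrence $xV_\ell = V_{\ell+1} + V_{\ell-1}$: multiplication by $(x + s - \beta)$ sends $\sum c_\ell V_\ell$ to $\sum(c_{\ell-1} + c_{\ell+1} + (s-\beta)c_\ell)V_\ell$. The difficulty is that each $s - \beta$ can be negative (indeed $s - \alpha_m < 0$), so one must verify that the off-diagonal contributions $c_{\ell-1} + c_{\ell+1}$ accumulated in prior steps dominate. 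An auxiliary route, useful as a consistency check and to pin down strict positivity of $c_0(s)$, is the orthogonality integral $c_j(s) = \frac{2}{\pi}\int_0^\pi \hat F_m(2\cos\theta + s)\sin((j+1)\theta)\sin\theta\,d\theta$, into which one substitutes the closed form for $F_m$ above and carries out a direct trigonometric computation. Small cases (for $m = 2$ one gets $c_0(s) = s(s^2 + s + 2)$, $c_1(s) = 3s^2 + 2s + 1$, $c_2(s) = 3s + 1$, $c_3(s) = 1$, all manifestly positive for $s > 0$) confirm the shape of the argument, and I expect the inductive scheme to extend by tracking a monotone quantity controlled by the spacing between $\alpha_m$ and the next root $2\cos(2\pi/(m+1))$.
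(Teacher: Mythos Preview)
Your overall architecture is sound and matches the paper: the closed form $F_m(2\cos\theta)=\sin^2((m{+}1)\theta)/\sin^2\theta$ yields the double roots and the sign of $\widehat F_m$, and once $c_j(s)\ge 0$ with $c_0(s)>0$ is in hand your chain $n\,c_0(s)\le\sum_i \widehat F_m(\tilde\mu_i+s)\le \widehat F_m(L+s)$ is exactly the bound \eqref{eq:IntroCBound}.

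The gap is the positivity of the $c_j(s)$, which you yourself flag as the main obstacle but do not resolve. Your factor-by-factor expansion of $\widehat F_m(x+s)=(x+s-\alpha_m)\prod_{j\ge 2}(x+s-\beta_j)^2$ runs into precisely the problem you name: several shifts $s-\beta_j$ (and always $s-\alpha_m$) are negative, so intermediate products need not have nonnegative $V$-coefficients, and the ``monotone quantity'' you hope will save the induction is never identified. The orthogonality integral does not give positivity directly either, since $\widehat F_m(2\cos\theta+s)$ changes sign on $[0,\pi]$; it confirms the $m=2$ computation but does not generalize as stated.

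The paper avoids this by decoupling the question into two independent pieces. First (Proposition~\ref{prop:FmProperties}) it shows that the \emph{unshifted} $\widehat F_m$ already has nonnegative $V_j$-coefficients, by solving the linear recurrence coming from $(x-\alpha_m)\sum y_{m,j}V_j=F_m$ in closed form: one obtains $y_{m,2k}=\sum_{i=0}^{m-1-k}V_{2i+1}(\alpha_m)$ and $y_{m,2k+1}=\sum_{i=0}^{m-1-k}V_{2i}(\alpha_m)$, and checks these are nonnegative. Second (Proposition~\ref{prop:LeftShift}) it proves a general shift lemma, entirely independent of $\widehat F_m$: for any $s>0$ and any $j$, the expansion $V_j(x+s)=\sum_i \epsilon_{j,i}V_i(x)$ has all $\epsilon_{j,i}\ge 0$ and $\epsilon_{j,0}>0$. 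This is a clean double induction via $V_j=xV_{j-1}-V_{j-2}$, the key inequality being $\epsilon_{j-1,i+1}\ge\epsilon_{j-2,i}$. Composing the two steps gives $c_j(s)=\sum_\ell y_{m,\ell}\,\epsilon_{\ell,j}\ge 0$ and $c_0(s)>0$ immediately. The idea you are missing is that the shift lemma concerns the basis polynomials $V_j$ themselves rather than the product structure of $\widehat F_m$: once $\widehat F_m$ lies in the nonnegative cone spanned by the $V_j$, any rightward translate does too, with strictly positive constant term.
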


We prove Theorem \ref{thm:IntroMachine} in \S \ref{sec:Asymptotic} using a general version of \eqref{eq:IntroCBound} and close consideration of the functions $\widehat{F}_m$. With notation as in Theorem \ref{thm:IntroMachine}, note that this gives a bound for $v(k, z)$ that is of order $k^{\frac{2 m - 1}{2}}$. According to the excellent survey of Hoory, Linial, and Wigderson \cite{HLW}, if $z = 2 \sqrt{k - 1} - \epsilon^\prime$, then
\[
v(k, z) = \mathrm{O} \left( (k - 1)^{\pi \sqrt{\frac{2}{\epsilon^\prime}}} \right),
\]
which is due to Friedman \cite{Friedman} and Nilli \cite{Nilli}. If $z = (2 - \epsilon) \sqrt{k - 1}$ and we instead fix $\epsilon$, Theorem \ref{thm:IntroMachine} gives the following.

\begin{Corollary}\label{cor:EpsilonCor}
Fix $\epsilon > 0$ and let $m$ be the minimal integer such that
\[
2 - \epsilon < \alpha_m = 2 \cos(\pi / m + 1).
\]
Let $v_\epsilon(k)$ be the maximal number of vertices of a $k$-regular graph $X$ with $\mu_1(X) \le (2 - \epsilon) \sqrt{k - 1}$. Then
\[
v_\epsilon(k) = \mathrm{O} \left( \left( \frac{k}{\sqrt{k - 1}} \right)^{2 m - 1} \right).
\]
In other words, the constant $C(k, \epsilon)$ from Theorem \ref{thm:QABT} satisfies
\[
C(k, \epsilon) = \Omega \left( \left( \frac{k}{\sqrt{k - 1}} \right)^{1 - 2 m} \right).
\]
\end{Corollary}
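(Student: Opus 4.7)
The plan is to apply Theorem \ref{thm:IntroMachine} directly with $z = (2-\epsilon)\sqrt{k-1}$, so that $z/\sqrt{k-1} = 2-\epsilon$ and the hypothesis $z/\sqrt{k-1} < \alpha_m$ matches exactly the definition of $m$ in the corollary. Since $\alpha_m - (2-\epsilon) > 0$ depends only on $\epsilon$, I can fix a single real $s$ with $0 < s < \alpha_m - (2-\epsilon)$ once and for all, independent of $k$. Theorem \ref{thm:IntroMachine} then guarantees $c_0(s) > 0$, and since the expansion of $x \mapsto \widehat F_m(x+s)$ in the basis $\{V_j\}$ involves no reference to $k$, the constant $c_0(s)$ depends only on $\epsilon$ (through $m$ and the choice of $s$). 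Setting $L = k/\sqrt{k-1}$, the bound \eqref{eq:IntroCBound} gives
\[
v_\epsilon(k) \;\le\; \frac{\widehat F_m(L+s)}{c_0(s)},
\]
so the whole problem reduces to estimating $\widehat F_m(L+s)$ as $k\to\infty$.

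The key structural step is to determine the degree and leading behavior of $\widehat F_m$. Since $V_j(x) = U_j(x/2)$ is monic of degree $j$, the sum $F_m(x) = \sum_{j=0}^{m} V_{2j}(x)$ is monic of degree $2m$. To see that $x-\alpha_m$ divides $F_m$, I would use the identity $V_{2j}(2\cos\theta) = \sin((2j+1)\theta)/\sin\theta$ together with the classical telescoping sum
\[
\sum_{j=0}^{m}\sin\bigl((2j+1)\theta\bigr) \;=\; \frac{\sin^2\bigl((m+1)\theta\bigr)}{\sin\theta},
\]
which yields $F_m(2\cos\theta) = \sin^2((m+1)\theta)/\sin^2\theta$. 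At $\theta = \pi/(m+1)$ this vanishes, so $\alpha_m$ is a root of $F_m$ and $\widehat F_m$ is a monic polynomial of degree exactly $2m-1$. I anticipate this trigonometric verification to be the main (though still routine) obstacle, since the rest of the argument is purely asymptotic.

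With $\widehat F_m$ identified as a monic polynomial of degree $2m-1$, and $L = k/\sqrt{k-1}\to\infty$ as $k\to\infty$, one has
\[
\widehat F_m(L+s) \;=\; (L+s)^{2m-1}\bigl(1+o(1)\bigr) \;=\; O\!\left( \bigl(k/\sqrt{k-1}\bigr)^{2m-1}\right),
\]
the implied constant depending only on $\epsilon$. Dividing by the $k$-independent constant $c_0(s)$ yields the first claim of the corollary. For the reformulation in terms of $C(k,\epsilon)$, I would invoke Theorem \ref{thm:QABT}: for any $k$-regular graph with $\mu_1(X) \le (2-\epsilon)\sqrt{k-1}$, only $\mu_0 = k$ (together with at most $\mu_1$ in the boundary case of equality) lies in $[(2-\epsilon)\sqrt{k-1},k]$, so $C(k,\epsilon)\cdot n \le 2$ and hence $n \le 2/C(k,\epsilon)$. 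Combined with the first part this gives $C(k,\epsilon) \ge \Omega\bigl((k/\sqrt{k-1})^{1-2m}\bigr)$, completing the proof.
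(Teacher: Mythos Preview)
Your argument for the vertex bound $v_\epsilon(k) \le O\bigl(L^{2m-1}\bigr)$ is correct and follows exactly the paper's approach: set $z = (2-\epsilon)\sqrt{k-1}$ in Theorem~\ref{thm:IntroMachine}, observe that both $m$ and the admissible range for $s$ are independent of $k$, and read off the order of growth of $\widehat F_m(L+s)/c_0(s)$ in $L = k/\sqrt{k-1}$. Your trigonometric verification that $\alpha_m$ is a root of $F_m$ is fine but redundant here, since Proposition~\ref{prop:FmProperties} already records that $\widehat F_m$ is a polynomial of degree $2m-1$.

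The second half of your argument, however, has the implication running in the wrong direction. From Theorem~\ref{thm:QABT} you correctly deduce that any $X$ with $\mu_1(X) \le (2-\epsilon)\sqrt{k-1}$ satisfies $n \le 2/C(k,\epsilon)$, hence $v_\epsilon(k) \le 2/C(k,\epsilon)$, i.e.\ $C(k,\epsilon) \le 2/v_\epsilon(k)$. But this is an \emph{upper} bound on $C(k,\epsilon)$; combining it with the upper bound $v_\epsilon(k) \le O\bigl(L^{2m-1}\bigr)$ yields no lower bound on $C(k,\epsilon)$ whatsoever. The correct route---and what the paper means by ``in other words''---is that the inequality \eqref{eq:IntroCBound} in Theorem~\ref{thm:IntroMachine} is itself obtained (via Lemma~\ref{lem:DownShift} and \eqref{eq:fMaxBound}) from a lower bound of the form $\nu\bigl([2-\epsilon,L]\bigr) \ge c_0(s)/\widehat F_m(L+s)$ on the spectral measure. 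This quantity \emph{is} a valid choice for $C(k,\epsilon)$ in Theorem~\ref{thm:QABT}, so $C(k,\epsilon) \ge c_0(s)/\widehat F_m(L+s) = \Omega\bigl(L^{1-2m}\bigr)$ directly. The two conclusions of the corollary are thus established simultaneously by the method, not one from the other.
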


It would be interesting to better understand how $v(k, z)$ changes with both $k$ and $z$. The remainder of the paper explores some small values and compares the bounds one can extract by our methods with known results for small $k$ and $\mu_1$. For example, the complete graph on $k$ vertices $K_k$, which is $(k - 1)$-regular, has $\mu_1(K_k) = -1$ for all $k$. The complete bipartite graph $K_{k, k}$, which is $k$-regular, has $\mu_1(X) = 0$ for all $k$. In particular, $v(k, z) \ge 2 k$ for all $z \ge 0$ (in fact, one can prove that this is an equality). An easy application of our bounds shows that $v(k, z) \le 2 k + 2$, so our methods give the correct asymptotic behavior. 

We note that one can also derive a completely explicit bound from the proof of Theorem 1 in \cite{Cioaba}. As discussed there, this bound for $C(\epsilon, k)$, which is of order $(1/2)^{\mathrm{O}(\sqrt{k} \log(\sqrt{k}/\epsilon)/\epsilon)}$, is not as strong asymptotically as those of Friedman \cite{Friedman} and Nilli \cite{Nilli}. Our bounds are also more effective, even in a practical sense. For example, our methods show that a $3$-regular graph $X$ with $\mu_1(X) \le 1$ has at most $20$ vertices (see \S \ref{ssec:3Reg}), but the methods from \cite{Cioaba} only give a bound of $473$. Expanding further, our methods can prove the following, which was known previously.

\begin{Theorem}\label{thm:3Bounds}
Let $X$ be a connected $3$-regular graph.
\begin{enumerate}


\item If $\mu_1(X) \le 1$, then $X$ is one of the following six graphs: the complete graph on $4$ vertices $K_4$, the complete bipartite graph of type $(3,3)$ $K_{3, 3}$, the triangular prism graph $Y_2$, the $3$-dimensional cube $C$, the Wagner graph $W$, and the Petersen graph $P$.

\item There are exactly four $3$-regular graphs with $\mu_1(X) = 1$: $K_{3, 3}$, $C$, $W$, and $P$.

\item The prism graph $Y_2$ is the only $3$-regular graph with $\mu_1(X) = 0$.

\item The complete graph on $4$ vertices is the unique $3$-regular graph with $\mu_1(X) = -1$.

\end{enumerate}
\end{Theorem}

Uniqueness of $K_4$ amongst $3$-regular graphs with $\mu_1 = -1$ is easy to prove without our methods, and a similar statement holds for all $k$. The Wagner graph is also known as a M\"obius ladder graph and is a circulant graph, being the Cayley graph of $\mathbb{Z} / 8 \mathbb{Z}$ with generators $\{\pm1, 4\}$. Appendix I contains more on the graphs in (2)-(5) of Theorem \ref{thm:3Bounds}. We wrote a program, freely available from the third author's website, that allows one to recreate our results or calculate bounds for any $k$. Rather than including large tables of bounds, we include several small tables and will make this program widely available so interested readers can compute bounds not included in this paper.

We close with some remarks on literature that appeared since this project was completed. Koledin and Stani\'c \cite{KS} published a classification of graphs with $\mu_1 \le 1$ (for important early work on this, see \cite{CGSS}). Our methods cannot give a complete classification. One can compare the tables at the end of this paper to their results, proved by completely different methods, to see the behavior of our bounds in comparison to reality in this simple case. Finally, while we were finalizing a new version of this paper, Cioab\v a--Koolen--Nozaki--Vermette sent us a preprint \cite{CKNV} that answers many questions that arise in this paper and gives interesting new information about $v(k, z)$; we refer the reader there for statements and other comments on the literature.





\subsubsection*{Acknowledgments} We thank Stephen Debacker for his help in getting this project going and Sebastian Cioab\v a for communication related to \cite{CKNV}.

\section{The Quantitative Alon--Boppana Theorem}\label{sec:ProofOfABT}

In this section, we sketch the proof of Theorem \ref{thm:QABT} and explain our method for optimizing the constant $C(k, \epsilon)$. Our exposition is based on the treatment given in the book of Davidoff, Sarnak, and Valette \cite{DSV}, and we refer the reader there for complete details.

For any nonnegative integer $m$, let $U_m(x)$ be the $m^{th}$ Chebyshev polynomial of second kind. This is the polynomial of degree $m$ such that
\[
U_m(\cos \theta) = \frac{\sin (m + 1) \theta}{\sin \theta}
\]
for all $\theta \in \R$. From the trace formula for finite $k$-regular graphs \cite[Theorem 1.4.6]{DSV}, we have the following.

\begin{Theorem}\label{thm:TraceFormula}
Let $X$ be a finite connected $k$-regular graph with $n$ vertices and $\{\mu_j(X)\}$ be the eigenvalues of its adjacency matrix. Then for all nonnegative integers $m$,
\begin{equation}\label{eq:TraceFormula}
(k - 1)^{\frac{m}{2}} \sum_{j = 0}^{n - 1} U_m \left( \frac{\mu_j(X)}{2 \sqrt{k - 1}} \right) \ge 0.
\end{equation}
\end{Theorem}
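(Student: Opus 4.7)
The plan is to interpret the sum in \eqref{eq:TraceFormula} as the trace of a matrix polynomial in $A = A_X$ that admits a decomposition as a sum of non-negative integer matrices. Setting $p_m(x) = (k-1)^{m/2} U_m(x/(2\sqrt{k-1}))$, the Chebyshev recurrence $U_{m+1}(y) = 2yU_m(y) - U_{m-1}(y)$ translates, after scaling by $\sqrt{k-1}$, into $p_0 = 1$, $p_1(x) = x$, and $p_{m+1}(x) = x\, p_m(x) - (k-1)\, p_{m-1}(x)$. Since $A$ is symmetric, the spectral theorem gives $\mathrm{tr}(p_m(A)) = \sum_j p_m(\mu_j(X))$, which is exactly the left-hand side of \eqref{eq:TraceFormula}, so it suffices to show $\mathrm{tr}(p_m(A)) \geq 0$.

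For each $m \geq 0$, let $P_m$ be the $n \times n$ matrix whose $(x,y)$ entry is the number of non-backtracking walks $x = v_0, v_1, \ldots, v_m = y$ in $X$ (those with $v_{i-1} \neq v_{i+1}$ for $1 \leq i \leq m-1$). All entries of $P_m$ are non-negative integers, so in particular $\mathrm{tr}(P_m) \geq 0$. A direct count yields $P_0 = I$, $P_1 = A$, and $P_2 = A^2 - kI$. Classifying any walk counted by $AP_m$ according to whether it backtracks at its second step gives the recurrence $AP_m = P_{m+1} + (k-1) P_{m-1}$ for $m \geq 2$. The boundary case is subtly different: one instead has $AP_1 = P_2 + kI$, since every immediate backtrack at a vertex $x$ contributes a length-two closed walk to $(AP_1)_{xx}$ that does not violate any non-backtracking condition for a walk of length $1$.

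The key identity I would prove, by induction on $m$, is
\[
p_m(A) = P_m + P_{m-2} + P_{m-4} + \cdots,
\]
where the sum terminates at $P_1$ if $m$ is odd and at $P_0 = I$ if $m$ is even. The base cases are immediate; for $m=2$, note that $p_2(A) = A^2 - (k-1)I = (A^2 - kI) + I = P_2 + P_0$, the extra $I$ being absorbed into $P_0$. For the inductive step, write $S_m$ for the claimed sum and verify $AS_m = P_{m+1} + k\, S_{m-1}$ by telescoping $AP_j$ over the $j$ appearing in $S_m$. The main obstacle is precisely the discrepancy between $AP_1 = P_2 + kI$ and $AP_j = P_{j+1} + (k-1)P_{j-1}$ for $j \geq 2$: the ``extra'' $I$ at $j=1$ is exactly the correction needed, providing the $kP_0$ contribution that fills out $k\, S_{m-1}$ when $m$ is odd. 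Granting the identity, $\mathrm{tr}(p_m(A)) = \sum_i \mathrm{tr}(P_{m-2i}) \geq 0$ since each $P_j$ has non-negative diagonal entries, which establishes \eqref{eq:TraceFormula}.
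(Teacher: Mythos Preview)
Your argument is correct and is essentially the standard trace-formula proof via non-backtracking walks. Note that the paper itself does not prove this theorem at all: it is simply quoted from \cite[Theorem~1.4.6]{DSV}, whose proof is precisely the one you outline. Your recurrences $P_2 = A^2 - kI$, $AP_1 = P_2 + kI$, and $AP_m = P_{m+1} + (k-1)P_{m-1}$ for $m \ge 2$ are the standard ones, and the identity $p_m(A) = \sum_{i \ge 0} P_{m-2i}$ telescopes exactly as you describe in both parities (you spell out only the odd case, but the even case is analogous, with $AP_0 = P_1$ supplying the final term). Since each $P_j$ has nonnegative entries, the trace conclusion follows.
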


It is convenient to define $V_m(x) = U_m(\frac{x}{2})$ (note that \cite{DSV} uses $X_m$). We then have the following.

\begin{Proposition}\label{prop:Measures}
Choose any $\epsilon > 0$ and $L \ge 2$. There exists a positive real constant $C(L, \epsilon) > 0$ such that for any probability measure $\nu$ on $[-L, L]$ with
\[
\int_{-L}^L V_m(x) \dnu \ge 0
\]
for every nonnegative integer $m$, we must have
\begin{equation}\label{eq:MeasureInequality}
\nu \left( [2 - \epsilon, L] \right) \ge C(L, \epsilon).
\end{equation}
\end{Proposition}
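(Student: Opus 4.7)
The plan is to prove \eqref{eq:MeasureInequality} by a duality argument on the Chebyshev moment cone. I would look for a polynomial $P$ with second-kind Chebyshev expansion $P(x)=\sum_{j\ge 0} c_j V_j(x)$ satisfying (i) $c_j \ge 0$ for every $j \ge 1$ and $c_0 > 0$, (ii) $P(x) \le 0$ on $[-L,2-\epsilon]$, and (iii) $P(x) \le M$ on $[2-\epsilon,L]$ for some finite $M > 0$. Given such a $P$, the hypothesis $\int V_j\,\dnu \ge 0$ together with $V_0 \equiv 1$ and $\int \dnu = 1$ yields
\[
c_0 \;\le\; \sum_{j\ge 0} c_j \int_{-L}^L V_j(x)\,\dnu \;=\; \int_{-L}^L P(x)\,\dnu \;\le\; M\,\nu\bigl([2-\epsilon,L]\bigr),
\]
and the proposition follows with $C(L,\epsilon) = c_0/M > 0$.

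For the construction I would use exactly the polynomial appearing in Theorem \ref{thm:IntroMachine}: fix the smallest $m$ with $\alpha_m := 2\cos(\pi/(m+1)) > 2-\epsilon$, pick any $s \in (0,\alpha_m-(2-\epsilon))$, and set $P(x) = \widehat{F}_m(x+s)$. The key identity is that $F_m(x) = V_m(x)^2$ as polynomials, which one sees by telescoping
\[
\sin\theta \cdot F_m(2\cos\theta) \;=\; \sum_{j=0}^m \sin\bigl((2j+1)\theta\bigr) \;=\; \frac{\sin^2((m+1)\theta)}{\sin\theta}.
\]
Hence $F_m \ge 0$ everywhere and $\alpha_m$ is a double root, so $\widehat{F}_m(y) = (y-\alpha_m)\,\widetilde{V}_m(y)^2$ for the polynomial $\widetilde{V}_m(y) := V_m(y)/(y-\alpha_m)$. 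Property (ii) is then automatic since $P(x) \le 0$ exactly when $x \le \alpha_m - s$, a set containing $[-L,2-\epsilon]$ by choice of $s$; and (iii) holds with $M = \widehat{F}_m(L+s)$ because a short sign analysis of $\widehat{F}_m{}'(y) = \widetilde{V}_m(y)^2 + 2(y-\alpha_m)\widetilde{V}_m(y)\widetilde{V}_m{}'(y)$ shows that $\widehat{F}_m$ is strictly increasing on $[\alpha_m,\infty)$.

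The technical heart of the matter, and the step I expect to be the main obstacle, is condition (i): the positivity of every coefficient $c_j(s)$ in the $V_j$-expansion of $\widehat{F}_m(x+s)$, together with strict positivity of $c_0(s)$. This is precisely the content deferred to Theorem \ref{thm:IntroMachine} in \S\ref{sec:Asymptotic}. My plan of attack is to combine the linearization formula $V_aV_b = V_{a+b} + V_{a+b-2} + \cdots + V_{|a-b|}$, which shows that non-negative combinations of products of $V_j$'s sit inside the cone of polynomials with non-negative $V_j$-expansion, with a careful analysis of how the positive shift $s$ acts on this cone, exploiting the factored form $\widehat{F}_m(x+s) = (x+s-\alpha_m)\widetilde{V}_m(x+s)^2$ and the Chebyshev addition-style expansion $V_r(x+s) = \sum_k \gamma_{r,k}(s)V_k(x)$. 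Strict positivity of $c_0(s)$ then reduces via orthogonality to the strict positivity of
\[
c_0(s) \;\propto\; \int_{-2}^{2}\widehat{F}_m(x+s)\sqrt{1-x^2/4}\,dx,
\]
which follows because $s > 0$ pushes the sign change of $\widehat{F}_m$ strictly to the left of $2$, leaving a positive tail on a subinterval of $[-2,2]$ of positive Chebyshev-weight measure. Granted (i), the duality step of the first paragraph completes the proof with the explicit constant $C(L,\epsilon) = c_0(s)/\widehat{F}_m(L+s)$.
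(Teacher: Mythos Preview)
Your overall strategy is sound and, in fact, gives a \emph{constructive} proof with an explicit constant, whereas the paper's own proof of this proposition is soft: it first shows $\nu([2-\epsilon,L])>0$ by a contradiction argument (using the same function $Y_m(x)=V_m(x)^2/(x-\alpha_m)$ with nonnegative $V_j$-coefficients, arguing that if $\nu$ were supported in $[-L,2-\epsilon]$ then its support would lie in the zero set of $V_m$ and of $V_{m'}$ for all large $m'$, hence be empty), and then invokes compactness in the space of admissible measures to upgrade positivity to a uniform lower bound. Your route bypasses compactness entirely and is exactly the machinery the paper develops later in \S\ref{sec:Asymptotic} (Propositions~\ref{prop:FmProperties} and~\ref{prop:LeftShift}, Theorem~\ref{thm:BoundMachine}); your telescoping identity $F_m=V_m^2$ is a clean observation that the paper leaves implicit, since it uses $Y_m=V_m^2/(x-\alpha_m)$ in the sketch here and $\widehat{F}_m=F_m/(x-\alpha_m)$ in \S\ref{sec:Asymptotic} without remarking that they coincide.

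There is, however, a genuine gap in your argument for the strict inequality $c_0(s)>0$. You reduce it to the sign of $\int_{-2}^{2}\widehat{F}_m(x+s)\sqrt{1-x^2/4}\,dx$ and then assert positivity because ``$s>0$ pushes the sign change strictly to the left of $2$, leaving a positive tail.'' But the integrand is nonpositive on $[-2,\alpha_m-s]$ and positive on $(\alpha_m-s,2]$; the mere existence of a positive tail does not prevent the negative part from dominating. Indeed, at $s=0$ one has $c_0(0)=0$ (the paper notes this explicitly after Lemma~\ref{lem:DownShift}), so the two contributions exactly cancel there, and your sign heuristic gives no reason why a small positive $s$ tips the balance the right way. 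The paper's fix is to track strict positivity through the inductive shift analysis itself (Proposition~\ref{prop:LeftShift}): writing $V_j(x+s)=\sum_i \epsilon_{j,i}(s)V_i(x)$, one shows $\epsilon_{j,0}(s)>0$ for every $j\ge 1$ and $s>0$, whence $c_0(s)=\sum_{j\ge 1} y_{m,j}\,\epsilon_{j,0}(s)>0$ because the unshifted coefficients $y_{m,j}$ of $\widehat{F}_m$ are strictly positive for $j\ge 1$. Your ``careful analysis of how the positive shift $s$ acts on this cone'' should already deliver this; the separate orthogonality shortcut is the part that does not work as written.
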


Before giving a sketch of the proof of Proposition \ref{prop:Measures}, we use it to give the proof of Theorem \ref{thm:QABT}.

\begin{proof}[Proof of Quantitative Alon--Boppana]
Let $X$ be a connected $k$-regular graph with $n$ vertices. Choose any $\epsilon > 0$, and set
\[
L = \frac{k}{\sqrt{k - 1}} \ge 2.
\]
Let
\[
\nu = \frac{1}{n} \bigoplus_{j = 0}^{n - 1} \delta \left( \frac{\mu_j(X)}{\sqrt{k - 1}} \right),
\]
where $\delta(x)$ is the Dirac measure at $x$. Then $\nu$ is a probability measure, and for every integer $m \ge 0$ Theorem \ref{thm:TraceFormula} gives
\[
\int_{-L}^L V_m(x) \dnu = \sum_{j = 0}^{n - 1} U_m \left( \frac{\mu_j(X)}{2 \sqrt{k - 1}} \right) \ge 0.
\]
By Proposition \ref{prop:Measures}, there is a constant $C(L, \epsilon) > 0$ such that
\[
\nu \left( [2 - \epsilon, L] \right) \ge C(L, \epsilon).
\]
By definition of $\nu$ as a weighted sum of Dirac measures,
\[
\nu \left( [2 - \epsilon, L] \right) = \frac{1}{n} \left( \# \left\{ j\ \mid\ \frac{\mu_j(X)}{\sqrt{k - 1}} \in [2 - \epsilon, L] \right\} \right) =
\]
\[
\frac{1}{n} \left( \# \left\{ j\ \mid\ \mu_j(X) \in [(2 - \epsilon)\sqrt{k - 1}, k] \right\} \right).
\]
Taking $C(k, \epsilon) = C(L, \epsilon)$ proves the theorem.
\end{proof}

Since some elements of the proof will be important, we now sketch the proof of Proposition \ref{prop:Measures}.

\begin{proof}[Sketch of proof for Proposition \ref{prop:Measures}]
First, note that the roots of $V_m(x)$ are precisely $2 \cos(\ell \pi / m + 1)$ for $\ell \in \{1, \dots, m\}$. Let $\alpha_m = 2 \cos(\pi / m + 1)$ be the largest root of $V_m$. One then uses the recursion formula for $V_m$ to show that
\[
Y_m(x) := \frac{V_m(x)^2}{x - \alpha_m} = \sum_{i = 0}^{2 m - 1} y_{m, i} V_i(x)
\]
where $y_{m, i} \ge 0$ for each $i \in \{0, \dots, 2 m - 1\}$. Notice that we have:
\[
\begin{cases} Y_m(x) < 0 & x < \alpha_m \\ Y_m(x) > 0 & x > \alpha_m \end{cases}
\]

Now, suppose that $L \ge 2$ and $\nu$ is a probability measure on $[-L, L]$ satisfying the conditions of the proposition such that $\nu([2 - \epsilon, L]) = 0$, i.e., $\nu$ is supported on $[-L, 2 - \epsilon]$. We can choose $m$ large enough that $\alpha_m > 2 - \epsilon$, which implies that $Y_m(x) \le 0$ for every $x$ in the support of $\nu$ and so
\[
\int_{-L}^L Y_m(x) \dnu \le 0.
\]
However,
\[
\int_{-L}^L Y_m(x) \dnu = \sum_{i = 0}^{2 m - 1} y_{m, i} \int_{-L}^L V_m(x) \dnu \ge 0
\]
by our assumption on $\nu$. It follows that the support of $\nu$ is a subset of the roots of $V_m$. The same conclusion must hold for any $m^\prime > m$ by the same argument. However, choosing any $m^\prime > m$ such that the roots of $V_m$ and $V_{m^\prime}$ are disjoint, we see that the support of $\nu$ is empty. This is a contradiction. Therefore, $\nu([2 - \epsilon, L]) > 0$ for any $\nu$ satisfying the conditions of the proposition. The existence of the constant $C(L, \epsilon)$ follows from a compactness argument in the space of measures satisfying the conditions of the proposition.
\end{proof}

We now describe our strategy for finding effective bounds for $C(k, \epsilon)$. Let $X$ be a connected $k$-regular graph with $n$ vertices, set $L = k / \sqrt{k - 1}$, choose $\epsilon > 0$, and let $\nu$ be a probability measure on $\mathcal{I} = [-L, L]$. If $f : \mathcal{I} \to \R$ is a $\nu$-measurable function and $Y \subset \mathcal{I}$ a $\nu$-measurable subset, then
\begin{equation}\label{eq:MeasureBound}
\nu(Y) \inf_{y \in Y} f(y) \le \int_Y f(y) \mathrm{d}\nu(y) \le \nu(Y) \sup_{y \in Y} f(y).
\end{equation}

Now suppose that $z_0 \in \mathcal{I}$, $\mathcal{I}_1 = [-L, z_0]$, $\mathcal{I}_2 = [z_0, L]$, and $f : \mathcal{I} \to \R$ is a $\nu$-measurable function such that $\int_{\mathcal{I}} f(x) \dnu \ge 0$. Moreover, suppose that $f$ is negative on $\mathcal{I}_1$. Then, as noted in the proof of Proposition \ref{prop:Measures}, $\nu(\mathcal{I}_2) > 0$. Define
\[
M_j = \sup_{y \in \mathcal{I}_j} f(y)
\]
Then $M_1 < 0 < M_2$, and we have the following string of implications:
\begin{eqnarray}
- \int_{\mathcal{I}_1} f(y) \mathrm{d}\nu(y) &\le& \int_{\mathcal{I}_2} f(y) \mathrm{d}\nu(y) \nonumber \\
-M_1 \nu(\mathcal{I}_1) &\le& M_2 \nu(\mathcal{I}_2) \label{eq:MaxBound} \\
-M_1 &\le& \nu(\mathcal{I}_2)(M_2 - M_1) \nonumber \\
\frac{-M_1}{M_2 - M_1} &\le& \nu(\mathcal{I}_2) \label{eq:fMaxBound0}.
\end{eqnarray}
Thus we obtain a positive lower bound for $\nu(\mathcal{I}_2)$. In \S\ref{sec:Asymptotic}, we implement this simple idea using certain linear combinations of the functions $V_m(x)$ defined above to find effective lower bounds for the constant $C(k, \epsilon)$ in the Theorem \ref{thm:QABT}, i.e., upper bounds for $v(k, (2 - \epsilon)\sqrt{k - 1})$.

\section{Behavior for arbitrary $\mu_1$ and $k$}\label{sec:Asymptotic}

Fix a real number $z$. For $k$ sufficiently large, Theorem \ref{thm:Finiteness} states that the number of $k$-regular graphs $X$ with $\mu_1(X) \le z$ is finite. In this section we consider the behavior of our methods for bounding $v(k, z)$. That is, we study the growth, in terms of $z$ and $k$, of the maximum number of vertices of a $k$-regular graph $X$ with $\mu_1(X) \le z$. 

Fix $k \ge 3$, define $L = k / \sqrt{k - 1}$, and set $\mathcal{I} = [-L, L]$. For any nonnegative integer $m$, let $U_m(x)$ be the $m^{th}$ Chebyshev polynomial of $2^{nd}$ kind and $V_m(x) = U_m(\frac{x}{2})$. Suppose that $\nu$ is any probability measure on $\mathcal{I}$ so that
\[
\int_{\mathcal{I}} V_m(x) \dnu \ge 0
\]
for all $m$. Choose any $w \in \left( -k, 2\sqrt{k - 1} \right)$ and let $\epsilon > 0$ be the number such that $w = (2 - \epsilon)\sqrt{k - 1}$. Set $z = 2 - \epsilon = w / \sqrt{k - 1}$. Our goal is to give a lower bound for the constant $C(k, \epsilon)$ such that
\[
\nu([z, L]) \ge C(k, \epsilon).
\]

For any $N \ge 0$, choose $\alpha_0, \dots, \alpha_N > 0$ and define
\[
f(x) = \sum_{m = 0}^N \alpha_m V_m(x).
\]
Then $\int_{\mathcal{I}} f(x) \dnu \ge 0$. Furthermore, we suppose:
\begin{itemize}

\item[$(\star)$] $f$ is strictly negative on $\mathcal{I}_1 = [-L, z]$.

\end{itemize}
Set $\mathcal{I}_2 = [z, L]$ and define
\[
M_j = \sup_{y \in \mathcal{I}_j} f(y) \nonumber.
\]
It is not hard to see that $M_2 = f(L)$ and $M_2 > 0 > M_1$. By \eqref{eq:fMaxBound0} in \S\ref{sec:ProofOfABT}, we then have
\begin{equation}\label{eq:fMaxBound}
\nu([z, L]) \ge \frac{-M_1}{M_2 - M_1} > 0.
\end{equation}
Using this equation, we can now let $C(k, \epsilon)$ be a real number such that
\begin{equation}\label{eq:cBound}
C(k, \epsilon) \geq \frac{-M_1}{M_2 - M_1}.
\end{equation}
Applying Theorem~\ref{thm:Finiteness} gives that
\begin{equation}\label{eq:vBound}
v(k,(2-\epsilon)\sqrt{k-1}) \leq \frac{M_2 - M_1}{-M_1}.
\end{equation}

Note that \eqref{eq:fMaxBound} is invariant under scaling $f$. Therefore, we make the normalization:
\begin{itemize}

\item[$(\star\star)$] $\sum \alpha_m^2 = 1$.

\end{itemize}
For the remainder of this section $f(x) = \sum \alpha_m V_m(x)$ will be a linear combination with nonnegative coefficients such that $(\star)$ and $(\star \star)$ hold.


To exhibit how one can explicitly apply these bounds, we first consider the function $f(x) = V_1(x) = x$. On the interval
\[
\left[ -\frac{k}{\sqrt{k - 1}}, \frac{z}{\sqrt{k - 1}} \right],
\]
the maximum value of $f$ equals $z / \sqrt{k - 1}$. On the interval
\[
\left[ \frac{z}{\sqrt{k - 1}}, \frac{k}{\sqrt{k - 1}} \right]
\]
the maximum value is $k / \sqrt{k - 1}$.

Inserting this into \eqref{eq:fMaxBound} and canceling the factors of $\sqrt{k - 1}$ shows that for any $z < 0$ and natural number $k$, a $k$-regular graph $X$ with $\mu_1(X) \le 0$ has at most
\[
\frac{z - k}{z}
\]
vertices. Taking $z = -1$, we have the following, which one can also prove by more elementary means using the fact that the trace of the adjacency matrix is $0$.

\begin{Corollary}\label{cor:z-1}
If $X$ is a $k$-regular graph with $\mu_1(X) \le -1$, then $X$ is the complete graph of $k + 1$ vertices.
\end{Corollary}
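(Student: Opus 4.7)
The plan is to combine the bound from Proposition \ref{prop:zLT0} with the elementary observation that a $k$-regular graph must contain at least $k+1$ vertices. First I would substitute $z = -1$ into the estimate $(z-k)/z$, which yields $n \le k+1$. On the other hand, any vertex of a $k$-regular graph has $k$ distinct neighbors, so there must be at least $k+1$ vertices in total, forcing $n = k+1$ exactly.

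Once the vertex count is pinned down, the classification becomes immediate: the only $k$-regular simple graph on $k+1$ vertices is the complete graph $K_{k+1}$, since each vertex must be adjacent to every other vertex. To close the argument, I would verify that $K_{k+1}$ does satisfy the hypothesis by recalling that its adjacency matrix is $J - I$, whose eigenvalues are $k$ (with multiplicity $1$) and $-1$ (with multiplicity $k$), so $\mu_1(K_{k+1}) = -1$ and the hypothesis is tight.

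As an alternative approach worth mentioning, one could sidestep Proposition \ref{prop:zLT0} entirely by using the trace identity $\sum_{j=0}^{n-1}\mu_j(X) = \mathrm{tr}(A_X) = 0$. Since $\mu_0 = k$ and every other eigenvalue is at most $-1$ by hypothesis, one gets $0 \le k - (n-1)$, hence again $n \le k+1$. This version has the benefit of also showing that equality forces $\mu_j = -1$ for all $j \ge 1$, which independently confirms $X = K_{k+1}$ via the spectral characterization of complete graphs.

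There is essentially no obstacle here — the work has already been done in Proposition \ref{prop:zLT0}, and the remaining content is the standard observation that $K_{k+1}$ is the unique $k$-regular graph on $k+1$ vertices. The only point requiring mild care is ensuring we interpret ``$k$-regular graph'' as connected (implicit throughout the paper), so that ``at least $k+1$ vertices'' holds without needing to rule out disjoint unions.
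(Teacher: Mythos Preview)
Your proposal is correct and matches the paper's own approach: the corollary is stated immediately after Proposition~\ref{prop:zLT0} as the specialization $z=-1$, with the remaining steps (at least $k+1$ vertices, uniqueness of $K_{k+1}$) left implicit. The paper likewise remarks that the trace-of-adjacency argument gives an alternative elementary proof, exactly as you outline.
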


We now consider the case of two terms:
\[
f_\sigma(x) = V_1(x) + \sigma V_2(x) = \sigma x^2 + x - \sigma.
\]
Note our different normalization of the coefficients than from \S \ref{sec:Asymptotic}.
\begin{figure}[t]
\centerline{\includegraphics[width=8cm]{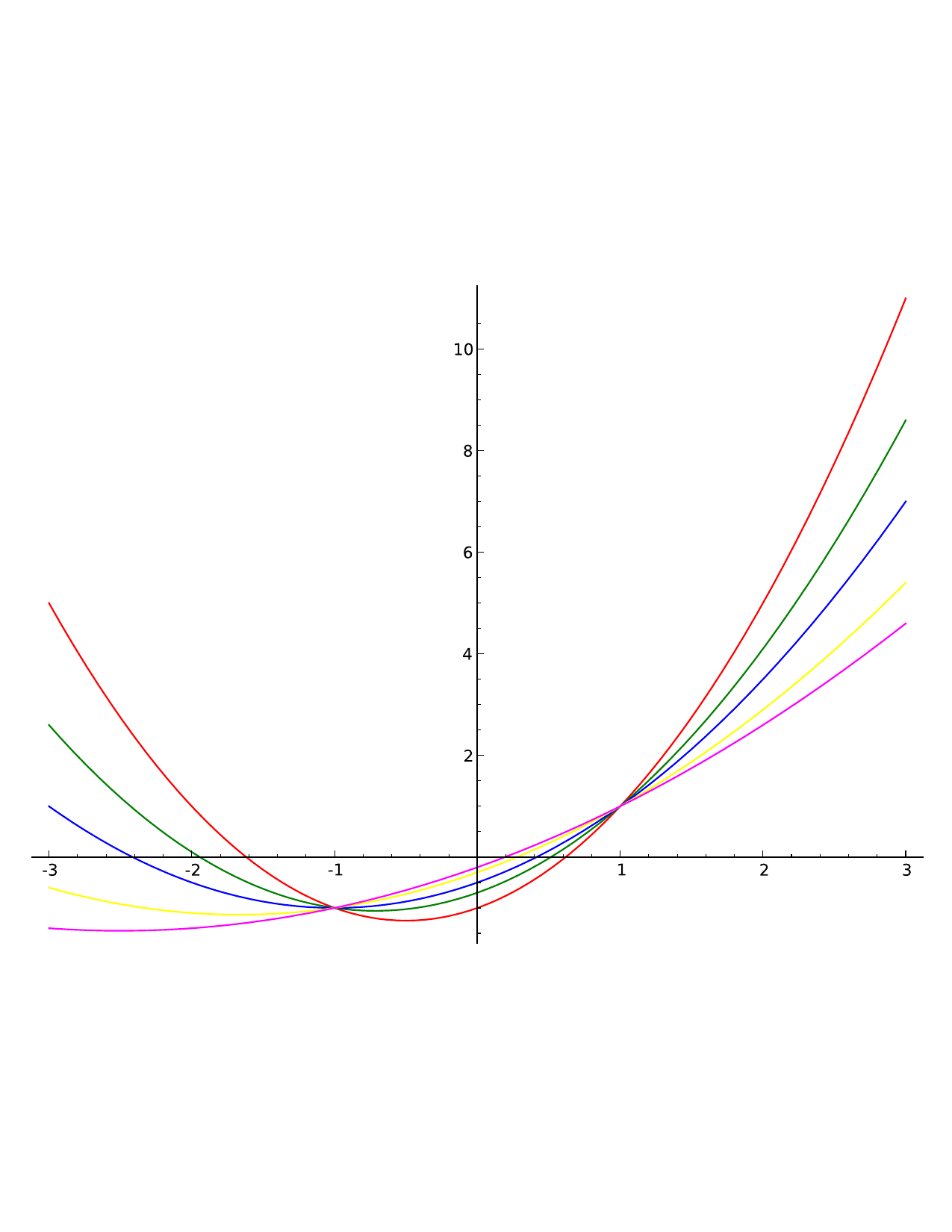}}
\caption{The functions $f_\sigma(x)$ for $\sigma \in (0, 1)$.}\label{fig:Degree2}
\end{figure}
For fixed $z$ as above, we need $f_\sigma$ to be strictly negative on the interval
\[
\mathcal{I}_1 = [-k / \sqrt{k - 1}, z / \sqrt{k - 1}].
\]
This happens if and only if
\begin{equation}\label{eq:SigmaBound}
\frac{z \sqrt{k - 1}}{k - 1 - z^2} < \sigma < \frac{k \sqrt{k - 1}}{k^2 - k + 1}.
\end{equation}

Now suppose that $z < 1$. To obtain a bound for $c_z(k)$ from $f_\sigma$, we note first that the maximum of $f_\sigma$ on
\[
\mathcal{I}_2 = [z / \sqrt{k - 1}, k / \sqrt{k - 1}]
\]
is
\[
M_2 = f \left( \frac{k}{\sqrt{k - 1}} \right) = \sigma \frac{k^2}{k - 1} + \frac{k}{\sqrt{k - 1}} - \sigma.
\]
For any $z < (k - 1)/k$, we can choose
\begin{equation}\label{eq:SigmaChoice}
\sigma = \frac{\sqrt{k-1}}{k - z},
\end{equation}
and simple analysis as in the linear case gives a bound for $c_z(k)$ that is roughly linear in $k$.

At $z = 0$, things are especially nice. Taking $\sigma = \sqrt{k - 1}/k$ as above, the values of $f_\sigma$ and the endpoints $-k / \sqrt{k - 1}$ and $0$ of $\mathcal{I}_2$ give
\[
M_1 = -\sqrt{k - 1}/k.
\]
The maximum value of $f_\sigma$ on $\mathcal{I}_2$ is its value at the endpoint, so
\[
M_2 = \frac{2 k}{\sqrt{k - 1}} - \frac{\sqrt{k - 1}}{k}.
\]
Thus
\[
v(k, 0) \le \frac{M_2 - M_1}{-M_1} = \frac{2 k^2}{k - 1} \le 2 k + 3.
\]
In fact, this is a strict inequality for $k \ge 4$, so one can in fact deduce that $v(k, 0) \le 2 k + 2$. It is known that $v(k, 0) = 2 k$, so our methods give the correct asymptotic growth.



When $z = 1$, the above methods break down. In other words, one must use additional Chebyshev polynomials in order to find effective bounds. We now describe a process by which one can calculate very good bounds for any $z < 2 \sqrt{k - 1}$. For any $m > 0$, consider the function
\begin{equation}\label{eq:FmDefinition}
F_m(x) = \sum_{j = 0}^m V_{2 j}(x).
\end{equation}
We first note that this function satisfies the following important properties.

\begin{Proposition}\label{prop:FmProperties}
Let $F_m(x)$, $m \ge 2$, be the function defined in \eqref{eq:FmDefinition}. For every $1 \le k \le m$, $\cos( k \pi / m + 1)$ is a double root of $F_m$ and $F_m(x) \ge 0$ for all $x \in \R$. Set $\alpha_m = 2 \cos(\pi / m + 1)$. Then
\begin{equation}\label{eq:FmhatDefinition}
\widehat{F}_m(x) = \frac{F_m(x)}{x - \alpha_m}
\end{equation}
is nonpositive for $x < \alpha_m$ and strictly positive for $x > \alpha_m$. Moreover, one has
\[
\widehat{F}_m(x) = \sum_{j = 0}^{2m - 1} y_{m, j} V_j(x)
\]
with $y_{m, j} \ge 0$ for all $j$.
\end{Proposition}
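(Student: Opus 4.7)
The plan centers on the closed form $F_m(x) = V_m(x)^2$, which reduces the first two claims to routine observations and leaves only the $V_j$-expansion of $\widehat{F}_m$ as real work. I would first establish this identity, then read off the statements about roots and signs, and finally combine the Christoffel--Darboux formula with the nonnegative linearization of $V_i V_j$ to finish.

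The cleanest derivation of $F_m = V_m^2$ uses the standard linearization formula $V_a(x) V_b(x) = V_{|a-b|}(x) + V_{|a-b|+2}(x) + \cdots + V_{a+b}(x)$; setting $a = b = m$ gives $V_m(x)^2 = V_0(x) + V_2(x) + \cdots + V_{2m}(x) = F_m(x)$. Equivalently, under $x = 2\cos\theta$ one computes $F_m(2\cos\theta) = \frac{1}{\sin\theta}\sum_{j=0}^m \sin((2j+1)\theta) = \sin^2((m+1)\theta)/\sin^2\theta = V_m(2\cos\theta)^2$. From $F_m = V_m^2$ the first two assertions are immediate: each root $2\cos(k\pi/(m+1))$ of $V_m$ is a double root of $F_m$ (note that the statement as printed appears to omit the factor of $2$), $F_m \ge 0$ everywhere, and $\widehat{F}_m(x) = V_m(x)^2/(x - \alpha_m)$ is a polynomial of degree $2m - 1$ whose sign matches that of $x - \alpha_m$ away from the roots of $V_m$. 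This gives $\widehat{F}_m(x) \le 0$ for $x < \alpha_m$ and $\widehat{F}_m(x) > 0$ for $x > \alpha_m$, since $V_m(x) > 0$ for $x$ beyond its largest root $\alpha_m$.

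The technical heart is the nonnegativity of the $y_{m,j}$. I would factor $\widehat{F}_m = V_m \cdot \bigl[V_m/(x - \alpha_m)\bigr]$ and attack the second factor with the Christoffel--Darboux identity
\[
\sum_{k=0}^{m-1} V_k(x) V_k(y) \;=\; \frac{V_m(x) V_{m-1}(y) - V_{m-1}(x) V_m(y)}{x - y},
\]
which holds because the $V_n$ are orthonormal with respect to the appropriate weight and satisfy the three-term recursion $V_{n+1}(x) = x V_n(x) - V_{n-1}(x)$. Setting $y = \alpha_m$ kills the second numerator term because $V_m(\alpha_m) = 0$, and the identity $V_{m-1}(\alpha_m) = \sin(m\pi/(m+1))/\sin(\pi/(m+1)) = 1$ reduces the formula to
\[
\frac{V_m(x)}{x - \alpha_m} \;=\; \sum_{k=0}^{m-1} V_k(\alpha_m)\, V_k(x),
\]
a combination whose coefficients $V_k(\alpha_m) = \sin((k+1)\pi/(m+1))/\sin(\pi/(m+1))$ are all strictly positive for $0 \le k \le m - 1$. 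Multiplying by $V_m$ and expanding each product $V_m(x) V_k(x)$ again via the linearization formula, which has only nonnegative coefficients (in fact coefficients equal to $0$ or $1$), then yields $\widehat{F}_m = \sum_{j=0}^{2m-1} y_{m,j} V_j$ with $y_{m,j} \ge 0$. The degree count is consistent since $V_m V_{m-1}$ contributes the $V_{2m-1}$ term, completing the proof.
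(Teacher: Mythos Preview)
Your argument is correct and takes a genuinely different route from the paper's. The paper never invokes $F_m = V_m^2$ in the proof of this proposition (though that identity is implicit in the function $Y_m = V_m^2/(x-\alpha_m)$ from the sketch of Proposition~\ref{prop:Measures}); instead it attacks the $V_j$-expansion of $\widehat{F}_m$ head-on by writing out $(x-\alpha_m)\sum_j c_j V_j(x)$, matching coefficients against $F_m$, and solving the resulting recursion to obtain the explicit formulas $c_{2k} = \sum_{i=0}^{m-1-k} V_{2i+1}(\alpha_m)$ and $c_{2k+1} = \sum_{i=0}^{m-1-k} V_{2i}(\alpha_m)$, then arguing positivity from the values $V_j(\alpha_m)$. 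Your approach---factor $\widehat{F}_m = V_m \cdot \bigl[V_m/(x-\alpha_m)\bigr]$, apply Christoffel--Darboux at $y=\alpha_m$ to express the bracket as $\sum_{k<m} V_k(\alpha_m) V_k(x)$ with visibly positive weights, and then linearize each $V_m V_k$---is cleaner and more structural: it explains \emph{why} the coefficients are nonnegative rather than verifying it by formula, and it makes the vanishing $y_{m,0}=0$ (used later after Lemma~\ref{lem:DownShift}) transparent, since every product $V_m V_k$ with $k\le m-1$ starts at $V_{m-k}$ with $m-k\ge 1$. The paper's method, on the other hand, buys you closed formulas for the $c_j$ that could in principle feed into sharper quantitative estimates.
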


\begin{proof}
That $\cos(k \pi / m + 1)$ is a double root of $F_m$ for every $1 \le k \le m$ follows from elementary manipulations of Chebyshev polynomials evaluated at cosines.
It follows from basic calculus that $F_m(x) \ge 0$ for all $x \in \R$ and that $\widehat{F}_m(x)$ is positive for $x > \alpha_m$ and nonnegative for $x < \alpha_m$.

It remains to prove the last assertion, namely that $\widehat{F}_m$ is a linear combination of the functions $V_j(x)$ with nonnegative coefficients. We first note that for any coefficients $c_j$, 
\begin{eqnarray}
(x - \alpha_m) \sum_{j = 0}^{2 m - 1} c_j V_j(x) &=& \nonumber \\
(c_1 - \alpha_m c_0) V_0(x) + \sum_{j = 1}^{2 m - 2} (c_{j + 1} + c_{j - 1} - \alpha_m c_j) V_j(x) &+& \label{eq:FhatCoeffRelations} \\
(c_{2 m - 2} - \alpha_m c_{2 m - 1}) V_{2 m - 1}(x) + c_{2 m - 1} V_{2 m}(x) && \nonumber
\end{eqnarray}
(cf.~Proposition 1.4.8 in \cite{DSV}). For \eqref{eq:FhatCoeffRelations} to equal $F_m(x)$, we therefore need:
\begin{eqnarray}
c_1 - \alpha_m c_0 &=& 1 \nonumber \\
c_{j + 1} + c_{j - 1} - \alpha_m c_j &=& \left\{ \begin{matrix} 0 & 1 \le j \le 2 m - 3\ \textrm{odd} \\ 1 & 2 \le j \le 2 m - 2\ \textrm{even}\end{matrix} \right. \nonumber \\
c_{2 m - 2} - \alpha_m c_{2 m - 1} &=& 0 \nonumber \\
c_{2 m - 1} &=& 1 \nonumber
\end{eqnarray}
One can easily check using the standard relations for Chebyshev polynomials that we can take:
\begin{eqnarray}
c_{2 k} &=& \sum_{i = 0}^{m - 1 - k} V_{2 i + 1}(\alpha_m) \label{eq:Coeff1} \\
c_{2 k + 1} &=& \sum_{i = 0}^{m - 1 - k} V_{2 i}(\alpha_m) \label{eq:Coeff2}
\end{eqnarray}
Then $V_j(\alpha_m) > 0$ for every $j < 2 m$, so every $c_j$ is positive. This proves the proposition.
\end{proof}

Figure \ref{fig:Fms} shows graphs of $F_m(x)$ for small $m$.
\begin{figure}[t]
\centerline{\includegraphics[width=8cm]{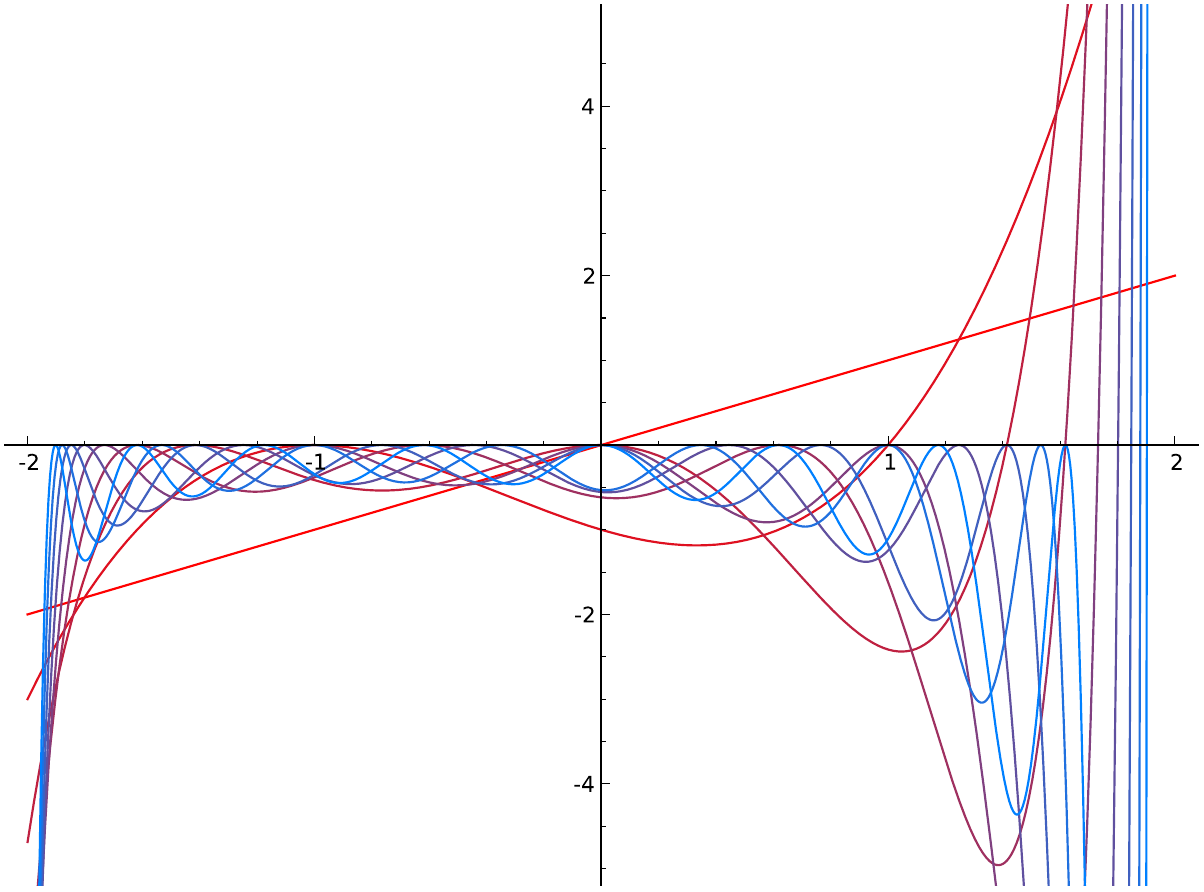}}
\caption{The functions $\widehat{F}_m(x)$ for small $m$.}\label{fig:Fms}
\end{figure}
Unfortunately, while it is a linear combination of Chebyshev polynomials with positive coefficients, $\widehat{F}_m(x)$ is not suitable for constructing the bounds under consideration in this paper because it is not strictly negative for $x < \alpha_m$. Therefore, for any $\mu_1$ such that $\mu_1 / \sqrt{k - 1} < \alpha_m$, the bound \eqref{eq:fMaxBound} from \S \ref{sec:Asymptotic} is always zero. To explain our strategy for extracting bounds from $\widehat{F}_m$, we begin with the following lemma, which still follows the strategy of \cite{DSV}.

\begin{Lemma}\label{lem:DownShift}
Let $\mathcal{I} = \mathcal{I}_1 \cup \mathcal{I}_2$ be an interval where $\mathcal{I}_1$ and $\mathcal{I}_2$ are intervals with disjoint interiors. Let $\nu$ be a probability measure on $\mathcal{I}$ such that $\int_{\mathcal{I}} V_j(x) \dnu \ge 0$ for all $j \ge 0$. Suppose that
\[
f(x) = \sum_{j = 0}^n c_j V_j(x)
\]
with $c_j \ge 0$ for all $0 \le j \le n$, and that $f(x) \le 0$ for all $x \in \mathcal{I}_1$. Then
\[
\nu(\mathcal{I}_2) \ge \frac{c_0 - M_1}{M_2 - M_1} \quad \left( M_j = \max_{x \in \mathcal{I}_,} f(x) \right).
\]
\end{Lemma}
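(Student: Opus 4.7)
The proof plan is to exploit the key fact that $V_0(x) = 1$ (the $0^{\text{th}}$ Chebyshev polynomial of the second kind is the constant $1$), so that the $j = 0$ term in the expansion $f = \sum c_j V_j$ contributes $c_0 \cdot \nu(\mathcal{I}) = c_0$ when integrated against the probability measure $\nu$, while every other term contributes something nonnegative. This produces a lower bound $\int_{\mathcal{I}} f(x)\,d\nu \geq c_0$, which is the essential new ingredient compared to the argument in \eqref{eq:fMaxBound}; the rest of the argument is the same sup/inf bookkeeping as in \eqref{eq:MeasureBound}--\eqref{eq:fMaxBound0}.

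Concretely, first I would write
\[
\int_{\mathcal{I}} f(x)\,d\nu = c_0 \int_{\mathcal{I}} V_0(x)\,d\nu + \sum_{j = 1}^n c_j \int_{\mathcal{I}} V_j(x)\,d\nu.
\]
Since $V_0 \equiv 1$ and $\nu$ is a probability measure, the first term equals $c_0$. By the hypotheses $c_j \geq 0$ and $\int_{\mathcal{I}} V_j(x)\,d\nu \geq 0$ for $j \geq 1$, the remaining sum is nonnegative. Hence $\int_{\mathcal{I}} f(x)\,d\nu \geq c_0$.

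Next, I would split the integral over $\mathcal{I}_1$ and $\mathcal{I}_2$ (their interiors are disjoint, and any overlap is a single boundary point, which has measure properties that do not affect the bound since we use only the maxima). Applying \eqref{eq:MeasureBound} on each piece gives
\[
\int_{\mathcal{I}_1} f(x)\,d\nu \leq M_1 \nu(\mathcal{I}_1), \qquad \int_{\mathcal{I}_2} f(x)\,d\nu \leq M_2 \nu(\mathcal{I}_2).
\]
Combining with $\nu(\mathcal{I}_1) + \nu(\mathcal{I}_2) = 1$, one gets
\[
c_0 \leq \int_{\mathcal{I}} f(x)\,d\nu \leq M_1 \nu(\mathcal{I}_1) + M_2 \nu(\mathcal{I}_2) = M_1 + (M_2 - M_1)\nu(\mathcal{I}_2).
\]
Rearranging (and using $M_2 > M_1$, which follows from $M_1 \leq 0 < c_0 \leq M_2$ since $f \leq 0$ on $\mathcal{I}_1$ and the chain $c_0 \leq \int f\,d\nu$ forces $M_2 \geq c_0 > 0$) yields the desired inequality
\[
\nu(\mathcal{I}_2) \geq \frac{c_0 - M_1}{M_2 - M_1}.
\]

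There is really no obstacle here; the whole point of the lemma is to isolate the shift phenomenon that makes Theorem \ref{thm:IntroMachine} work, namely that subtracting the constant $c_0$ inside the integral (equivalently, replacing $f$ by $f - c_0$, which has one fewer available Chebyshev coefficient) recovers the strict negativity needed on $\mathcal{I}_1$ while paying only a factor of $c_0$ in the numerator. The only mild care I would take is with the boundary point shared by $\mathcal{I}_1$ and $\mathcal{I}_2$: if $\nu$ has an atom there, one can assign it to either piece without affecting the final bound, since both integrals are bounded by the corresponding maxima. This will be the form actually applied in Section \ref{sec:Asymptotic} with the ``shifted'' function $x \mapsto \widehat{F}_m(x + s)$ expanded as $\sum c_j(s) V_j(x)$.
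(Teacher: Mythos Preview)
Your proof is correct and is essentially the paper's own argument. The paper simply defines $\widetilde{f}(x) = f(x) - c_0$, notes that this is still a nonnegative combination of the $V_j$ (now for $j \ge 1$), and applies the already-derived bound \eqref{eq:fMaxBound} to $\widetilde{f}$ with $\widetilde{M}_j = M_j - c_0$; this is exactly the computation you unpack directly, and indeed you remark on the equivalence yourself.
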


\begin{proof}
Applying \eqref{eq:fMaxBound} to
\[
\widetilde{f}(x) = f(x) - c_0 = \sum_{j = 1}^n c_j V_j(x),
\]
which is still a linear combination of $V_j$s with positive coefficients, we get
\[
\nu(\mathcal{I}_2) \ge \frac{-\widetilde{M}_1}{\widetilde{M}_2 - \widetilde{M}_1},
\]
where
\[
\widetilde{M}_j = \max_{x \in \mathcal{I}_j} \widetilde{f}(x).
\]
Since $\widetilde{M}_j = M_j - c_0$, the lemma follows.
\end{proof}

Unfortunately, Lemma \ref{lem:DownShift} does not improve our situation, since $c_0 = 0$ for $\widehat{F}_m(x)$. We also must shift $\widehat{F}_m$ by some positive $s$, which we can do by the following general proposition.

\begin{Proposition}\label{prop:LeftShift}
Let
\[
F(x) = \sum_{j = 0}^n c_j V_j(x)
\]
with $c_j \ge 0$ for all $j$. For any $s > 0$ and each $0 \le j \le n$, there is a polynomial $q_j = q_{F, s, j}$ such that $q_j(s) \ge 0$ for all $j$ and
\[
F(x + s) = \sum_{j = 0}^n q_j(s) V_j(x).
\]
That is, the function $x \mapsto F(x + s)$ remains a linear combination of Chebyshev polynomials with nonnegative coefficients. Moreover, $q_0(s) > 0$ for all $s > 0$.
\end{Proposition}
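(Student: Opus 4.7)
The plan is first to reduce by linearity to the case $F = V_n$ for each $n \geq 0$: if I can show $V_n(x+s) = \sum_{k=0}^n q_{n,k}(s) V_k(x)$ with each $q_{n,k}(s)$ a polynomial in $s$ having nonnegative coefficients, then the general case follows by taking $q_k(s) = \sum_j c_j q_{j,k}(s)$, which remains a sum of polynomials with nonnegative coefficients.

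To set up the induction, I would apply the Chebyshev recursion $V_{n+1}(y) = y V_n(y) - V_{n-1}(y)$ at $y = x+s$, then use $x V_i(x) = V_{i+1}(x) + V_{i-1}(x)$ (with the convention $V_{-1} = 0$) to re-expand in the basis $\{V_k(x)\}$. Matching coefficients of $V_k(x)$ yields the recursion
\[
q_{n+1, k}(s) = q_{n, k-1}(s) + q_{n, k+1}(s) + s\, q_{n, k}(s) - q_{n-1, k}(s),
\]
with base cases $q_{0,k}(s) = \delta_{k,0}$ and $q_{1,k}(s) = s\,\delta_{k,0} + \delta_{k,1}$.

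The main obstacle is the minus sign in front of $q_{n-1, k}$, which prevents a naive induction on $n$ from preserving nonnegativity. To circumvent this, I would prove by simultaneous strong induction on $n$ two statements: (a) each $q_{n, k}(s)$ has nonnegative coefficients in $s$, and (b) for every $k \geq 0$, the difference $q_{n, k+1}(s) - q_{n-1, k}(s)$ has nonnegative coefficients in $s$. Granting (a) and (b) for indices up to $n$, statement (a) at $n+1$ follows by rewriting the recursion as
\[
q_{n+1, k}(s) = q_{n, k-1}(s) + \bigl(q_{n, k+1}(s) - q_{n-1, k}(s)\bigr) + s\, q_{n, k}(s),
\]
a sum of three terms each with nonnegative coefficients. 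Statement (b) at $n+1$ follows similarly from
\[
q_{n+1, k+1}(s) - q_{n, k}(s) = \bigl(q_{n, k+2}(s) - q_{n-1, k+1}(s)\bigr) + s\, q_{n, k+1}(s),
\]
which is nonnegative by the inductive hypothesis. The base cases $n \in \{0, 1\}$ are checked directly. Identifying the correct auxiliary statement (b) is the step I expect to be the hardest; once it is in place, the induction closes routinely.

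For the strict positivity $q_0(s) > 0$, observe that dropping the (inductively nonnegative) term $q_{n, 1}(s) - q_{n-1, 0}(s)$ from the recursion at $k = 0$ gives $q_{n+1, 0}(s) \geq s\, q_{n, 0}(s)$, so iteratively $q_{n, 0}(s) \geq s^n$ coefficient-wise. Since $F$ is not identically zero, some coefficient $c_{j_0}$ is positive, and therefore $q_0(s) = \sum_j c_j q_{j, 0}(s) \geq c_{j_0} s^{j_0} > 0$ for every $s > 0$.
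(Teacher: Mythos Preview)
Your argument is correct and follows essentially the same route as the paper's own proof: both reduce by linearity to $F=V_n$, derive the same three-term recursion for the coefficients $q_{n,k}(s)$ (the paper writes them $\epsilon_{n,k}$), and close the induction via the auxiliary inequality $q_{n,k+1}(s)-q_{n-1,k}(s)\ge 0$, which is exactly the paper's key step. Your treatment is slightly tidier in that you track nonnegativity coefficientwise in $s$ and make the lower bound $q_{n,0}(s)\ge s^n$ explicit, and you correctly note that the strict positivity of $q_0(s)$ requires $F\not\equiv 0$.
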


\begin{proof}
It suffices to prove the proposition for $V_j(x)$, $0 \le j \le \infty$. We proceed by induction, and leave checking the first couple cases to the reader. In particular, fix $s > 0$ and suppose that
\begin{equation}\label{eq:InductionShift}
V_k(x + s) = \sum_{i = 0}^k \epsilon_{k, i} V_i(x) \quad 0 \le k \le j - 1.
\end{equation}
Then
\begin{eqnarray}
V_j(x + s) = (x + s) V_{j - 1}(x + s) - V_{j - 2}(x + s) &=& \nonumber \\
(\epsilon_{j - 1, 1} + s \epsilon_{j - 1, 0} - \epsilon_{j - 2, 0}) V_0(x) &+& \nonumber \\
\sum_{i = 1}^{j - 2} (\epsilon_{j - 1, i + 1} + \epsilon_{j - 1, i - 1} + s \epsilon_{j - 1, i} - \epsilon_{j - 2, i}) V_i(x) &+& \label{eq:ShiftInduction} \\
(\epsilon_{j - 1, j - 2} + s \epsilon_{j - 1, j - 1}) V_{j - 1}(x) + \epsilon_{j - 1, j - 1} V_j(x). \nonumber &&
\end{eqnarray}
The inductive hypothesis then implies that $V_{j - 1}(x)$ and $V_j(x)$ have positive coefficients. In fact, by induction and the fact that $V_0(x) = 1$, we see that $\epsilon_{k, k} = 1$ for all $k$ (independent of $s$).

To prove the proposition, it suffices to show that
\[
\epsilon_{j - 1, i + 1} - \epsilon_{j - 2, i} \ge 0
\]
for every $0 \le i \le j - 2$. Yet again, we induct. The above calculation shows that
\[
\epsilon_{j - 1, i + 1} = \epsilon_{j - 2, i + 2} + \epsilon_{j - 2, i} + s \epsilon_{j - 2, i + 1} - \epsilon_{j - 3, i + 1},
\]
and the inductive hypothesis implies that $\epsilon_{j - 2, i + 2} - \epsilon_{j - 3, i + 1} \ge 0$ for $0 \le i \le j - 3$, so
\[
\epsilon_{j - 1, i + 1} - \epsilon_{j - 2, i} = \epsilon_{j - 2, i + 2} + s \epsilon_{j - 2, i + 1} - \epsilon_{j - 3, i + 1} \ge 0
\]
in those cases. It remains to consider the case $i = j - 2$, where
\[
\epsilon_{j - 1, i + 1} - \epsilon_{j - 2, i} = \epsilon_{j - 1, j - 1} - \epsilon_{j - 2, j - 2} = 1 - 1 = 0.
\]
Also note that the inductive definition for each $\epsilon$ implies that it is a polynomial in $s$.

In remains to show that $\epsilon_{j, 0} > 0$. To see this,
\[
\epsilon_{j, 0} = \epsilon_{j - 1, 1} + s \epsilon_{j - 1, 0} - \epsilon_{j - 2, 0}.
\]
One last induction assumes $\epsilon_{j - 1, 0} > 0$, and we saw above that
\[
\epsilon_{j - 1, 1} - \epsilon_{j - 2, 0} \ge 0.
\]
Since $s > 0$, the claim follows. This completes the proof of the proposition.
\end{proof}

The above leads us to the following technical result, which is the best-optimized function we found for computing vertex bounds.

\begin{Theorem}\label{thm:BoundMachine}
Fix $z \in \R$ and a positive integer $k$ large enough that $z < 2 \sqrt{k - 1}$. Let $m$ be any positive integer such that
\[
\frac{z}{\sqrt{k - 1}} < \alpha_m = 2 \cos(\pi / m + 1).
\]
Set $L = k / \sqrt{k - 1}$ and define
\[
\widehat{F}_m(x) = \sum_{j = 0}^m \frac{V_j(x)}{x - \alpha_m}.
\]
For any real number
\[
0 < s < \alpha_m - \frac{z}{\sqrt{k - 1}},
\]
write the function $x \mapsto \widehat{F}_m(x + s)$ as
\[
F_{m, s}(x) = \sum_{j = 0}^{2 m - 1} c_j(s) V_j(x),
\]
and define
\begin{equation}
M_{m, s} = \max_{\frac{z}{\sqrt{k - 1}} \le x \le L} F_{m, s}(x) = \widehat{F}_m(L + s). \label{eq:ShiftMax} \\
\end{equation}
If $X$ is any $k$-regular graph with $\mu_1(X) = z$, then $X$ has at most $\frac{M_{m, s}}{c_0(s)}$ vertices. That is,
\begin{equation}
v(k, z) \le \frac{M_{m, s}}{c_0(s)}. \label{eq:ShiftConstant}
\end{equation}
\end{Theorem}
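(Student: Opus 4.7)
The approach is to invoke Lemma~\ref{lem:DownShift} with the test function $F_{m,s}(x) = \widehat{F}_m(x+s)$, exploiting the sign structure of $\widehat{F}_m$ from Proposition~\ref{prop:FmProperties} together with the shift invariance of nonnegative Chebyshev expansions from Proposition~\ref{prop:LeftShift}. Given a $k$-regular graph $X$ on $n$ vertices with $\mu_1(X) \le z$, I define the rescaled spectral probability measure
\[
\nu = \frac{1}{n} \sum_{j=0}^{n-1} \delta\!\left(\frac{\mu_j(X)}{\sqrt{k-1}}\right)
\]
on $\mathcal{I} = [-L, L]$, which satisfies $\int V_j(x)\dnu \ge 0$ for every $j \ge 0$ by Theorem~\ref{thm:TraceFormula}. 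Partition $\mathcal{I}$ into $\mathcal{I}_1 = [-L, z/\sqrt{k-1}]$ and $\mathcal{I}_2 = (z/\sqrt{k-1}, L]$. Because $\mu_0(X) = k$ while $\mu_j(X) \le z$ for $j \ge 1$, the unique element of the support of $\nu$ lying in $\mathcal{I}_2$ is $L$, so $\nu(\mathcal{I}_2) = 1/n$ exactly.

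Next, Proposition~\ref{prop:FmProperties} gives $\widehat{F}_m$ a nonnegative Chebyshev expansion and forces $\widehat{F}_m \le 0$ on $(-\infty, \alpha_m]$; applying Proposition~\ref{prop:LeftShift} to $\widehat{F}_m$ with shift $s$ then writes $F_{m,s}(x) = \sum_{j=0}^{2m-1} c_j(s) V_j(x)$ with $c_j(s) \ge 0$ and $c_0(s) > 0$. The constraint $s < \alpha_m - z/\sqrt{k-1}$ ensures $x+s < \alpha_m$ for every $x \in \mathcal{I}_1$, so $F_{m,s}(x) = \widehat{F}_m(x+s) \le 0$ throughout $\mathcal{I}_1$ and in particular $M_1 := \sup_{\mathcal{I}_1} F_{m,s} \le 0$. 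Setting $M_2 := \sup_{\mathcal{I}_2} F_{m,s}$, Lemma~\ref{lem:DownShift} now yields
\[
\frac{1}{n} \;=\; \nu(\mathcal{I}_2) \;\ge\; \frac{c_0(s) - M_1}{M_2 - M_1}.
\]
Since $L > 2$ forces $V_j(L) > 0$ for every $j$, evaluating the Chebyshev expansion of $F_{m,s}$ at $L$ gives $M_2 \ge F_{m,s}(L) \ge c_0(s)$. A short cross-multiplication using $M_1 \le 0$ and $M_2 \ge c_0(s)$ upgrades the above to $1/n \ge c_0(s)/M_2$, so $n \le M_2/c_0(s)$, and it remains only to identify $M_2$ with $\widehat{F}_m(L+s) = M_{m,s}$ as claimed in \eqref{eq:ShiftMax}.

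This identification is the main technical obstacle: I must show that $\widehat{F}_m$ achieves its supremum on $[z/\sqrt{k-1} + s,\, L+s]$ at the right endpoint $L+s$. Because $\widehat{F}_m \le 0$ on $(-\infty, \alpha_m]$ while $\widehat{F}_m(L+s) > 0$, the question reduces to monotonicity of $\widehat{F}_m$ on $[\alpha_m, L+s]$. On $[2, L+s]$ this is immediate from the nonnegative Chebyshev expansion of $\widehat{F}_m$, since each $V_j$ is positive and strictly increasing on $[2, \infty)$ by virtue of the identity $V_j(2\cosh t) = \sinh((j+1)t)/\sinh t$. On the remaining strip $[\alpha_m, 2]$, individual $V_j$ need not be monotone and one must argue directly, either from the explicit coefficient formulas \eqref{eq:Coeff1}--\eqref{eq:Coeff2} or by comparing $\widehat{F}_m(x)$ with $\widehat{F}_m(2)$ via the factorization $F_m = (x - \alpha_m)\widehat{F}_m$ and the location of the other double roots of $F_m$ in $(-2, \alpha_m)$. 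Once monotonicity on $[\alpha_m, L+s]$ is in hand, $M_2 = M_{m,s}$ and the bound \eqref{eq:ShiftConstant} follows.
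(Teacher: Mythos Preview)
Your argument is exactly the one the paper has in mind: Theorem~\ref{thm:BoundMachine} is stated there without a separate proof, as the direct combination of Proposition~\ref{prop:FmProperties}, Lemma~\ref{lem:DownShift}, and Proposition~\ref{prop:LeftShift}, and you have assembled these pieces correctly (including the short algebraic step from the bound $(c_0(s)-M_1)/(M_2-M_1)$ of Lemma~\ref{lem:DownShift} to the cleaner $c_0(s)/M_2$, which the paper leaves implicit). The monotonicity issue you flag on $[\alpha_m,2]$ is resolved immediately by the elementary bound $V_j(x) \le j+1 = V_j(2)$ for all $x \in [-2,2]$ (equivalently $|\sin((j+1)\theta)| \le (j+1)\sin\theta$ on $[0,\pi]$), which applied term-by-term to the nonnegative expansion $\widehat{F}_m = \sum y_{m,j} V_j$ gives $\widehat{F}_m(x) \le \widehat{F}_m(2) \le \widehat{F}_m(L+s)$.
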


Recall that $c_0(s) > 0$ by Proposition \ref{prop:LeftShift}. In order to make computations like those in \S \ref{sec:Asymptotic}, one is left, of course, with finding the optimal choice of $s$. We leave this optimization to the reader, who can use our code to perform such an optimization. As for the asymptotic bounds for the spectrum given by this method, the choice of $s$ is irrelevant.

Instead, we now fix $z \in \R$ and study the nature of our vertex bound \eqref{eq:ShiftConstant} and complete the proof of Theorem \ref{thm:IntroMachine}. Suppose that $k$ is sufficiently large that
\[
\frac{z}{\sqrt{k - 1}} < 2,
\]
so there are only finitely many $k$-regular graphs $X$ with $\mu_1(X) = z$. Then there is a minimal integer $m = m(k) > 0$ such that
\[
\frac{z}{\sqrt{k - 1}} < \alpha_m = 2 \cos(\pi / m + 1).
\]
That is,
\[
m = m_z(k) = \left\lceil \pi~\mathrm{arccos} \left( \frac{z}{2 \sqrt{k - 1}} \right)^{-1} \right \rceil - 1,
\]
where $\lceil~\rceil$ is the ceiling function\footnote{Actually, we take $m$ to be one larger when the expression inside the ceiling function is an integer, so there is a genuine gap between $z / \sqrt{k - 1}$ and $\alpha_m$.}. Fix an arbitrary real number $s$ such that
\[
0 < s < \alpha_m - \frac{z}{\sqrt{k - 1}},
\]
and consider the function $F_{m, s}(x)$ defined in Theorem \ref{thm:BoundMachine}. This is a polynomial of degree $2 m - 1$. For any such $s$, the quantity $M_{m, s}$ from \eqref{eq:ShiftMax} is $F_{m, s}(L) = \widehat{F}_m(L + s)$. Therefore the bound \eqref{eq:ShiftConstant} is precisely \eqref{eq:IntroCBound}, which proves Theorem \ref{thm:IntroMachine}. Now we prove Corollary \ref{cor:EpsilonCor}.

\begin{proof}[Proof of Corollary \ref{cor:EpsilonCor}]
Fix $\epsilon > 0$, and apply the above to $z = (2 - \epsilon) \sqrt{k - 1}$. Then
\[
m = \left\lceil \pi~\mathrm{arccos} (2 - \epsilon)^{-1} \right \rceil - 1
\]
is independent of $k$. Similarly, we need $s \in (0, \alpha_m - (2 - \epsilon))$. This interval is independent of $k$, so we can also fix $s$ independent of $k$. Thus the vertex bound
\[
\frac{\widehat{F}_m(L + s)}{c_0(s)}
\]
from Theorem \ref{thm:IntroMachine} is a function of $L = k / \sqrt{k - 1}$ of degree $2 m - 1$. The corollary follows.
\end{proof}


\subsection{Bounds for $3$-regular graphs}\label{ssec:3Reg}

Suppose that $k = 3$. To classify the $3$-regular graphs $X$ with $\mu_1(X) \le 1$, we must implement the above with $\epsilon = 2 - 1 / \sqrt{2}$. This is an excellent example of how one can export our methods to other settings, as it suffices to consider the first $3$ Chebyshev polynomials. More precisely, we consider:
\begin{eqnarray}
V_1(x) &=& x \nonumber \\
V_2(x) &=& x^2 - 1 \nonumber \\
V_3(x) &=& x^3 - 2 x \nonumber \\
f(x) &=& \alpha_1 V_1(x) + \alpha_2 V_2(x) + \alpha_3 V_3(x) \nonumber
\end{eqnarray}
with $\alpha_m \ge 0$ for each $m$ and $\sum \alpha_m^2 = 1$. In addition, we require that $f$ be strictly negative on the closed interval $\mathcal{I}_1 = [-3 / \sqrt{2}, 1 / \sqrt{2}]$.

One can use Python\footnote{Python code allowing one to implement the computations in this paper is available from the third author's website.} to optimize the choice of $\{\alpha_1, \alpha_2, \alpha_3\}$ and, using \eqref{eq:cBound}, prove that
\[
C(3, 2 - 1 / \sqrt{2}) > \frac{1}{24}.
\]
Extending the above process to 5 terms, we get that
\[
C(3, 2 - 1 / \sqrt{2}) > \frac{1}{21}.
\]
Using \eqref{eq:vBound} and the fact that $v(3, 1)\in\mathbb{N}$, this implies that $v(3, 1) \leq 20$. Since all such graphs are known, one can check (2)-(5) in Theorem \ref{thm:3Bounds} by brute force. Using similar analysis with six terms allows one to prove that $\mu_1(X) \le 2$ implies that $X$ has at most $105$ vertices. Unfortunately, it is not currently feasible to compute all $3$-regular graphs with at most $105$ vertices, so we cannot give a complete classification of the $3$-regular graphs with $\mu_1(X) \le 2$ by our methods. The largest $3$-regular graph we know with $\mu_1(X) = 2$ is the Levi graph, which has $30$ vertices. A referee communicated a combinatorial proof that indeed $v(3,2) = 30$, and this is also shown in \cite{CKNV}. However, we conjecture\footnote{This conjecture is proved in \cite{CKNV}.} that if $X$ is a $3$-regular graph with $\mu_1(X) \le 1.9$, then $X$ has at most $18$ vertices, in which case one can easily compute all such graphs.

\subsection{Bounds for $k$-regular graphs, $k \ge 4$}\label{ssec:4Reg}

Using the same analysis described in \S\ref{ssec:3Reg}, we consider the behavior of our bounds for $k$-regular graphs, $k \ge 4$. It appears that the rolling cube graph, which has $24$ vertices, is the largest $4$-regular graph with $\mu_1 \le 2$ and that the Doyle graph, which has $27$ vertices, is the largest with $\mu_1 \le 3$.\footnote{Since this paper was completed, \cite{CKNV} showed that in fact $v(4, 2) = 35$ and $v(4, 3) = 728$.}

\begin{table}
\begin{center}
\begin{tabular}{|c|c|}
\hline
$\mu_1$ upper bound & vertex upper bound \\
\hline
$-1$ & $5$ \\
\hline
$0$ & $11$ \\
\hline
$1$ & $23$ \\
\hline
$2$ & $77$ \\
\hline
\end{tabular}
\end{center}
\caption{Vertex bounds for $4$-regular graphs with small $\mu_1$}
\end{table}

\begin{table}
\begin{center}
\begin{tabular}{|c|c|}
\hline
$\mu_1$ upper bound & vertex upper bound \\
\hline
$-1$ & $6$ \\
\hline
$0$ & $12$ \\
\hline
$1$ & $23$ \\
\hline
\end{tabular}
\end{center}
\caption{Vertex bounds for $5$-regular graphs with small $\mu_1$}
\end{table}

\begin{table}
\begin{center}
\begin{tabular}{|c|c|}
\hline
$\mu_1$ upper bound & vertex upper bound \\
\hline
$-1$ & $7$ \\
\hline
$0$ & $14$ \\
\hline
$1$ & $25$ \\
\hline
$2$ & $115$ \\
\hline
\end{tabular}
\end{center}
\caption{Vertex bounds for $6$-regular graphs with small $\mu_1$}
\end{table}

\begin{table}
\begin{center}
\begin{tabular}{|c|c|}
\hline
$\mu_1$ upper bound & vertex upper bound \\
\hline
$-1$ & $8$ \\
\hline
$0$ & $16$ \\
\hline
$1$ & $27$ \\
\hline
$2$ & $80$ \\
\hline
\end{tabular}
\end{center}
\caption{Vertex bounds for $7$-regular graphs with small $\mu_1$}
\end{table}

\begin{table}
\begin{center}
\begin{tabular}{|c|c|}
\hline
$\mu_1$ upper bound & vertex upper bound \\
\hline
$-1$ & $9$ \\
\hline
$0$ & $18$ \\
\hline
$1$ & $30$ \\
\hline
$2$ & $72$ \\
\hline
\end{tabular}
\end{center}
\caption{Vertex bounds for $8$-regular graphs with small $\mu_1$}
\end{table}

\begin{table}
\begin{center}
\begin{tabular}{|c|c|}
\hline
$\mu_1$ upper bound & vertex upper bound \\
\hline
$-1$ & $10$ \\
\hline
$0$ & $20$ \\
\hline
$1$ & $33$ \\
\hline
$2$ & $70$ \\
\hline
\end{tabular}
\end{center}
\caption{Vertex bounds for $9$-regular graphs with small $\mu_1$}
\end{table}

\begin{table}
\begin{center}
\begin{tabular}{|c|c|}
\hline
$\mu_1$ upper bound & vertex upper bound \\
\hline
$-1$ & $11$ \\
\hline
$0$ & $22$ \\
\hline
$1$ & $36$ \\
\hline
$2$ & $70$ \\
\hline
\end{tabular}
\end{center}
\caption{Vertex bounds for $10$-regular graphs with small $\mu_1$}
\end{table}

\section*{Appendix: The $3$-regular graphs with $\mu_1(X) \le 1$}

Below are the six $3$-regular graphs $X$ with $\mu_1(X) \le 1$.

\subsection*{$K_4$: The complete graph on $4$ vertices}

\begin{center}
\begin{tikzpicture}
[every node/.style={circle, fill=black}]
\node (n1) at (1,2) {};
\node (n2) at (0,1) {};
\node (n3) at (2,1) {};
\node (n4) at (1,0) {};

\foreach \from/\to in {n1/n2, n1/n3, n1/n4, n2/n3, n2/n4, n3/n4} \draw (\from) -- (\to);

\end{tikzpicture}
\end{center}
\begin{center}
Spectrum: $\{3, -1, -1, -1\}$
\end{center}

\subsection*{$K_{3, 3}$: The complete bipartite graph of type $(3,3)$}

\begin{center}
\begin{tikzpicture}
[every node/.style={circle, fill=black}]
\node (n1) at (1,0) {};
\node (n2) at (2,0) {};
\node (n3) at (3,0) {};
\node (n4) at (1,1) {};
\node (n5) at (2,1) {};
\node (n6) at (3,1) {};

\foreach \from/\to in {n1/n4, n1/n5, n1/n6, n2/n4, n2/n5, n2/n6, n3/n4, n3/n5, n3/n6} \draw (\from) -- (\to);

\end{tikzpicture}
\end{center}
\begin{center}
Spectrum: $\{3, 0, 0, 0, 0, -3\}$
\end{center}

\subsection*{$Y_2$: The triangular prism}

\begin{center}
\begin{tikzpicture}
[every node/.style={circle, fill=black}]
\node (n1) at (0,0) {};
\node (n2) at (3,0) {};
\node (n3) at (3/2,3*1.73205081/2) {};
\node (n4) at (1,1/2) {};
\node (n5) at (2,1/2) {};
\node (n6) at (3/2,1.366025405) {};

\foreach \from/\to in {n1/n2, n1/n3, n1/n4, n2/n3, n2/n5, n3/n6, n4/n5, n4/n6, n5/n6} \draw (\from) -- (\to);

\end{tikzpicture}
\end{center}
\begin{center}
Spectrum: $\{3, 1, 0, 0, -2, -2\}$
\end{center}

\subsection*{$C$: The $3$-dimensional cube}

\begin{center}
\begin{tikzpicture}
[every node/.style={circle, fill=black}]
\node (n1) at (0,0) {};
\node (n2) at (3,0) {};
\node (n3) at (0,3) {};
\node (n4) at (3,3) {};
\node (n5) at (3/4,3/4) {};
\node (n6) at (9/4,3/4) {};
\node (n7) at (3/4,9/4) {};
\node (n8) at (9/4,9/4) {};

\foreach \from/\to in {n1/n2, n1/n3, n1/n5, n2/n4, n2/n6, n3/n4, n3/n7, n4/n8, n5/n6, n5/n7, n6/n8, n7/n8} \draw (\from) -- (\to);

\end{tikzpicture}
\end{center}
\begin{center}
Spectrum: $\{3, 1, 1, 1, -1, -1, -1, 3\}$
\end{center}

\subsection*{$W$: The Wagner graph}

\begin{center}
\begin{tikzpicture}
[every node/.style={circle, fill=black}]
\node (n1) at (2,0) {};
\node (n2) at (2*0.70710678118,2*0.70710678118) {};
\node (n3) at (0,2) {};
\node (n4) at (-2*0.70710678118,2*0.70710678118) {};
\node (n5) at (-2,0) {};
\node (n6) at (-2*0.70710678118,-2*0.70710678118) {};
\node (n7) at (0,-2) {};
\node (n8) at (2*0.70710678118,-2*0.70710678118) {};

\foreach \from/\to in {n1/n2, n1/n8, n1/n5, n2/n3, n2/n6, n3/n4, n3/n7, n4/n5, n4/n8, n5/n6, n6/n7, n7/n8} \draw (\from) -- (\to);

\end{tikzpicture}
\end{center}
\begin{center}
Spectrum: $\{3, 1, 1, -1+\sqrt{2}, -1+\sqrt{2}, -1, -1-\sqrt{2}, -1+\sqrt{2}\}$
\end{center}

\subsection*{$P$: The Petersen graph}

\begin{center}
\begin{tikzpicture}
[every node/.style={circle, fill=black}]
\node (n1) at (0,2) {};
\node (n2) at (2*-0.95105651629,2*0.30901699437) {};
\node (n3) at (-2*0.58778525229,-2*0.80901699437) {};
\node (n4) at (2*0.58778525229,-2*0.80901699437) {};
\node (n5) at (2*0.95105651629,2*0.30901699437) {};
\node (n6) at (0,1) {};
\node (n7) at (-0.95105651629,0.30901699437) {};
\node (n8) at (-0.58778525229,-0.80901699437) {};
\node (n9) at (0.58778525229,-0.80901699437) {};
\node (n10) at (0.95105651629,0.30901699437) {};

\foreach \from/\to in {n1/n2, n1/n5, n1/n6, n2/n3, n2/n7, n3/n4, n3/n8, n4/n5, n4/n9, n5/n10, n6/n8, n6/n9, n7/n9, n7/n10, n8/n10} \draw (\from) -- (\to);

\end{tikzpicture}
\end{center}
\begin{center}
Spectrum: $\{3, 1, 1, 1, 1, 1, -2, -2, -2, -2\}$
\end{center}

\bibliographystyle{plain}

\bibliography{GraphFiniteness}

\end{document}